\newcommand{\1}{\mathrm{id}}
\newcommand{\I}{\mathbf{I}}
\newcommand{\K}{\mathbb{K}}
\newcommand{\id}{\mathbf{1}}
\newcommand{\Z}{\mathbb{Z}}
\newcommand{\GL}{\mathbf{GL}}
\newcommand{\F}{\mathbb{F}}
\newcommand{\Fun}{\mathbb{F}_1}
\newcommand{\PG}{\ensuremath{\mathbf{PG}}}
\newcommand{\PGL}{\ensuremath{\mathbf{PGL}}}
\newcommand{\PSL}{\ensuremath{\mathbf{PSL}}}
\newcommand{\Aut}{\ensuremath{\mathrm{Aut}}}
\newcommand{\AG}{\ensuremath{\mathbf{AG}}}
\newcommand{\Spec}{\mathrm{Spec}}
\newcommand{\mD}{\ensuremath{\mathscr{D}}}
\newcommand{\mC}{\ensuremath{\mathscr{C}}}
\newcommand{\mS}{\ensuremath{\mathscr{S}}}
\newcommand{\mF}{\ensuremath{\mathscr{F}}}
\newcommand{\mP}{\ensuremath{\mathscr{P}}}
\newcommand{\mB}{\ensuremath{\mathscr{B}}}
\newcommand{\mQ}{\ensuremath{\mathbf{Q}}}
\newcommand{\mA}{\ensuremath{\mathscr{A}}}
\newcommand{\hB}{\ensuremath{\mathbf{B}}}
\newcommand{\hP}{\ensuremath{\mathbf{P}}}
\def\doubleprod#1#2{\ooalign{$#1\prod$\cr$#1\coprod$\cr}}
\DeclareMathOperator*{\Rprod}{\mathpalette\doubleprod\relax}
\newtheorem{theorem}{Theorem}[section]
\newtheorem{proposition}[theorem]{Proposition}
\theoremstyle{definition}
\newtheorem{remark}[theorem]{Remark}
\newcommand{\wis}[1]{{\text{\em \usefont{OT1}{cmtt}{m}{n} #1}}}
\def\Spec{\wis{Spec}}
\title[The Weyl functor | Introduction to Absolute Arithmetic]{The Weyl functor | Introduction to Absolute\\ Arithmetic}
\author[Koen Thas]{Koen Thas}
\begin{document}
\setcounter{page}{3}

\begin{abstract}
Starting from an ancient observation of Tits concerning the interpretation of symmetric groups as Chevalley groups over 
a (non-existing) field having only one element, we describe combinatorial geometry over this field, as well as Linear Algebra.
We arrive at an ``absolute mantra'' which is one of the basic principles of the present book.
\end{abstract}

\begin{classification}
05B, 05E, 12E20, 20B25, 20E42, 20F36, 51B25.
\end{classification}

\begin{keywords}
Absolute Linear Algebra, BN-pair, building, combinatorial $\Fun$-geometry, group representation, Weyl functor.
\end{keywords}

\maketitle
\tableofcontents

\newpage
\section{Introduction}

We start this chapter by elaborating on ideas which were hatched from some seminal remarks made by Tits in his early paper
``Sur les analogues alg\'{e}briques des groupes semi-simples complexes'' (1957) \cite{anal}.\\

\medskip
\subsection{Projective $\mathbb{F}_1$-geometry}

When considering a class of incidence geometries which are defined over finite fields | take for instance the class of 
finite classical buildings of a fixed rank and type (we refer to later sections for a formal explanation of these notions) | it sometimes makes
sense to consider the ``limit'' of these geometries when the number of field elements tends to $1$. As a star example, let the class of geometries be the 
classical projective planes $\PG(2,\K)$ defined over finite fields $\K$. Then the number of points per line of such a plane is
\begin{equation}
\vert \K \vert + 1, 
\end{equation}
so in the limit, the ``limit object'' should have $1 + 1$ points incident with every line. On the other hand, we want the limit object still to be an
axiomatic projective plane\index{axiomatic!projective plane}, so we still want it to have the following properties:
\begin{itemize}
\item[(i)]
any two distinct lines meet in precisely one point;
\item[(ii)]
any two distinct points are incident with precisely one line (the dual of (i));
\item[(iii)]
not all points are on one and the same line (to avoid degeneracy).
\end{itemize}

It is clear that such a limit projective plane (``defined over $\mathbb{F}_1$'') should be an ordinary triangle (as a graph). So it is nothing else than a {\em chamber} in the building of any thick projective plane. Note that projective planes precisely are  generalized $3$-gons (which are also to be defined later). \\

Adopting this point of view, it is easily seen that, more generally,
projective $n$-spaces over $\mathbb{F}_1$ should be just sets $X$ of cardinaly $n + 1$ endowed with the geometry of $2^X$: any subset (of cardinality $0 \leq r + 1 \leq n + 1$) is a subspace (of dimension $r$). 
In other words, projective $n$-spaces over $\Fun$ are complete graphs on $n + 1$ vertices with a natural subspace structure.
It is important to note that these spaces still satisfy the Veblen-Young axioms, and that they are the only such incidence geometries with thin lines.

\begin{proposition}[See, e.g., Cohn \cite{Cohn} and Tits \cite{anal}]
\label{propCT}
Let $n \in \mathbb{N} \cup \{-1\}$.
The combinatorial projective space $\PG(n,\mathbb{F}_1) = \PG(n,1)$ is the complete graph on $n + 1$ vertices endowed with the induced geometry of subsets,
and $\Aut(\PG(n,\mathbb{F}_1)) \cong \PGL_{n + 1}(\mathbb{F}_1) \cong \mathbf{S}_{n + 1}$. 
\end{proposition}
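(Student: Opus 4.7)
My plan is to dispose of the three separate assertions of the proposition in turn: the structural description of $\PG(n,\Fun)$ as a combinatorial object, the computation of its automorphism group, and the identification of that group with $\PGL_{n+1}(\Fun)$.

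For the structural description I would argue by induction on $n$, taking the base case $n=2$ from the preamble, where the discussion of axioms (i)--(iii) together with thinness already forces a triangle, i.e.\ $2^{X}$ on a $3$-element set $X$. For the inductive step I fix a hyperplane $H$ of $\PG(n,\Fun)$, which by induction is the complete graph on an $n$-set endowed with its full subset geometry, and a point $p$ outside $H$. Axiom (ii) together with thinness forces each line through $p$ to consist of just $p$ and a single point of $H$, so the $r$-dimensional subspaces through $p$ are in bijection with the $(r-1)$-dimensional subspaces of $H$, i.e.\ with the $r$-subsets of $H$; combined with the $r$-flats living inside $H$, this identifies the subspace lattice of $\PG(n,\Fun)$ with $2^{H\cup\{p\}}$. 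Hence $\PG(n,\Fun)$ is the complete graph on $n+1$ vertices with its induced Boolean subspace structure, and moreover it is the unique such geometry satisfying (i)--(iii) with thin lines.

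For $\Aut(\PG(n,\Fun))$, the key observation is that the subspace lattice on $X=\{x_{0},\dots,x_{n}\}$ is defined purely in terms of subset-cardinality, so every element of $\mathrm{Sym}(X)$ is automatically a geometric automorphism, while conversely every geometric automorphism restricts to a bijection of the point set $X$. Thus $\Aut(\PG(n,\Fun))\cong\mathbf{S}_{n+1}$.

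The identification $\PGL_{n+1}(\Fun)\cong\mathbf{S}_{n+1}$ is the Tits ``absolute'' mantra itself: the Chevalley group of type $A_{n}$ is declared, at $\Fun$, to reduce to its Weyl group $\mathbf{S}_{n+1}$, and because the $\Fun$-center is trivial, projectivization changes nothing. The principal ``obstacle'' here is not mathematical but definitional: at this point of the chapter $\PGL_{n+1}(\Fun)$ has no meaning independent of the Tits convention, so this third clause is less a theorem to be verified than a notational unpacking -- the genuine content of the proposition lives entirely in the first two steps.
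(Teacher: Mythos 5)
Your proposal is correct, and its skeleton matches the paper's: the paper's own proof consists of exactly two sentences, namely that the geometric description was ``already obtained'' in the informal discussion preceding the proposition, and that the symmetric group ``clearly'' is the full automorphism group. What you add is the actual content behind the first of those sentences: the paper merely asserts that the complete graphs with their Boolean subspace structure are the only thin incidence geometries satisfying the Veblen--Young axioms, whereas you prove this uniqueness by induction on the dimension (hyperplane plus external point, thin lines through $p$ meeting $H$ in a single point, hence the subspace lattice is forced to be $2^{H\cup\{p\}}$). That argument is sound at the level of rigor of the chapter; the only cosmetic gap is that your induction starts at $n=2$ while the proposition allows $n\in\mathbb{N}\cup\{-1\}$, so the degenerate cases $n=-1,0,1$ should be checked directly (they are trivial). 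Your reading of the clause $\PGL_{n+1}(\Fun)\cong\mathbf{S}_{n+1}$ as definitional rather than substantive is exactly the paper's stance: the text explicitly defers the matrix-theoretic justification to the later section on absolute Linear Algebra, where $\GL_d(\Fun)\cong\mathbf{S}_d$ is derived and projectivization is seen to change nothing. So nothing in your argument conflicts with the paper; you have simply written out the verification that the author compresses into ``we already have obtained'' and ``clearly.''
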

\begin{proof}
We already have obtained the geometric part of the proposition. As for the group theoretical part, the symmetric group on $n + 1$ letters clearly is 
the full automorphism group of $\PG(n,1)$. 
\end{proof}

It is extremely important to note that any $\PG(n,\K)$ with $\K$ a division ring contains (many) subgeometries isomorphic to $\PG(n,\Fun)$ as defined above; so the latter object is independent of $\K$, and is the {\em common geometric substructure of all projective spaces of a fixed given dimension}:
\begin{equation}
\underline{\mA}: \{ \PG(n,\K) \vert \K \ \ \mbox{division}\ \ \mbox{ring} \} \longrightarrow \ \ \{ \PG(n,\Fun)\}.
\end{equation}

Further in this chapter, we will formally find the automorphism groups of $\Fun$-vector spaces through matrices, and these groups 
will perfectly agree with Proposition \ref{propCT}. We will also investigate other examples of limit buildings, as first described by Tits in \cite{anal}. In fact, we will look for a more general functor $\underline{\mA}$ (called {\em Weyl functor}\index{Weyl!functor} for reasons to be explained later) from a certain category of more general incidence geometries than buildings, to its subcategory of fixed objects under $\underline{\mA}$.  \\

Note that over $\mathbb{F}_1$,  

\begin{equation}\mathbf{P\Gamma L}_{n + 1}(\mathbb{F}_1)  \cong \PGL_{n + 1}(\mathbb{F}_1)  \cong \PSL_{n + 1}(\mathbb{F}_1).\end{equation}

\medskip
\subsection{Counting functions}

It is easy to see the symmetric group also directly as a limit with $\vert \K \vert \longrightarrow 1$ of linear groups $\PG(n,\K)$ (with the dimension fixed).
The number of elements in $\PG(n,\K)$ (where $\K = \F_q$ is assumed to be finite and $q$ is a prime power) is
\begin{equation}
\frac{(q^{n + 1} - 1)(q^{n + 1} - q)\cdots(q^{n + 1} - q^n)}{(q - 1)} = (q - 1)^nN(q)
\end{equation}
for some polynomial $N(X) \in \Z[X]$, and we have 
\begin{equation}
N(1) = (n + 1)! = \vert \mathbf{S}_{n +  1} \vert.\\
\end{equation}

\medskip
Now let $n,q \in \mathbb{N}$, and define $[n]_q = 1 + q + \cdots + q^{n - 1}$. (For $q$ a prime power, $[n]_q = \vert \PG(n,q)\vert$.) Put $[0]_q! = 1$, and define 
\begin{equation}
[n]_q! := [1]_q[2]_q\ldots [n]_q.
\end{equation}

Let $\mathbf{R}$ be a ring, and let $x, y, q$ be ``variables'' for which $yx = qxy$. Then there are polynomials $\left[\begin{array}{c}n \\ k\end{array}\right]_q$ in $q$ with integer coefficients,
such that
\begin{equation}
(x + y)^n = \sum_{k = 0}^n\left[\begin{array}{c}n \\ k\end{array}\right]_qx^ky^{n - k}.
\end{equation}

Then 
\begin{equation}
\left[\begin{array}{c}n \\ k\end{array}\right]_q = \frac{[n]_q!}{[k]_q![n - k]_q!},
\end{equation}
and if $q$ is a prime power, this is the number of $(k - 1)$-dimensional subspaces of $\PG(n - 1,q)$ ($=\wis{Grass}(k,n)(\F_q)$). The next proposition again gives sense to the limit 
situation of $q$ tending to $1$.

\begin{proposition}[See e.g. Cohn \cite{Cohn}]
The number of $k$-dimensional linear subspaces of $\PG(n,\mathbb{F}_1)$, with $k \leq n \in \mathbb{N}$, equals

\begin{equation}
\left[\begin{array}{c}n + 1 \\ k + 1\end{array}\right]_1 = \frac{n!}{(n - k)!k!} = \left[\begin{array}{c}n + 1 \\ k + 1\end{array}\right].
\end{equation}
\end{proposition}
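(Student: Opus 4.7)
The plan is to derive both equalities directly from the combinatorial description of $\PG(n,\Fun)$ given in Proposition \ref{propCT}, and then to verify that the $q = 1$ specialization of the Gaussian binomial agrees with the usual binomial coefficient.

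First, I would invoke Proposition \ref{propCT}: the space $\PG(n,\Fun)$ is realized as the complete graph on a set $X$ with $\vert X \vert = n+1$, equipped with the subspace structure in which a subset $Y \subseteq X$ of cardinality $r+1$ is precisely an $r$-dimensional subspace (for $-1 \leq r \leq n$, with the convention $\vert \emptyset \vert = 0$ corresponding to dimension $-1$). Under this identification, a $k$-dimensional linear subspace of $\PG(n,\Fun)$ is exactly a $(k+1)$-subset of an $(n+1)$-set, so the count is immediately
\begin{equation}
\binom{n+1}{k+1} \ = \ \frac{(n+1)!}{(k+1)!\,(n-k)!}.
\end{equation}

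Next, I would match this with the claimed $q$-binomial $\left[\begin{array}{c}n+1\\k+1\end{array}\right]_1$. By the definition $[m]_q = 1 + q + \cdots + q^{m-1}$, specializing $q = 1$ yields $[m]_1 = m$, hence $[m]_1! = m!$; substituting into
\begin{equation}
\left[\begin{array}{c}n+1\\k+1\end{array}\right]_q \ = \ \frac{[n+1]_q!}{[k+1]_q!\,[n-k]_q!}
\end{equation}
gives precisely $(n+1)!/((k+1)!\,(n-k)!)$, which coincides with the ordinary binomial $\binom{n+1}{k+1}$. This closes the chain of equalities in the statement.

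There is essentially no obstacle here beyond bookkeeping: the only subtle point is to be careful that the shift ``dimension $k$ corresponds to cardinality $k+1$'' is consistent with the indexing conventions used in the $q$-binomial (so that $\PG(n,\F_q)$ has $[n+1]_q$ points and the Grassmannian of $k$-subspaces has cardinality $\left[\begin{array}{c}n+1\\k+1\end{array}\right]_q$), and to note that the equality $\binom{n+1}{k+1}_{q=1} = \binom{n+1}{k+1}$ recovers the ``expected'' Gauss limit, giving independent confirmation of the combinatorial count from the first paragraph.
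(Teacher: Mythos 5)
Your argument is correct, and it is essentially the intended one: the paper gives no proof of this proposition at all (it is stated with only a citation to Cohn), and the justification it presupposes is exactly your two observations --- a $k$-dimensional subspace of $\PG(n,\Fun)$ is a $(k+1)$-subset of an $(n+1)$-set by Proposition \ref{propCT}, and $[m]_1 = m$ turns the Gaussian binomial at $q=1$ into the ordinary binomial coefficient. One thing you should make explicit rather than pass over silently: your count comes out as $\frac{(n+1)!}{(k+1)!\,(n-k)!}$, whereas the middle term of the displayed identity in the statement reads $\frac{n!}{(n-k)!\,k!}$; these differ by the factor $\frac{n+1}{k+1}$ and agree only when $k=n$. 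The value you obtain is the correct one (it is what both the subset count and the $q=1$ specialization of the Gaussian binomial give), so the printed middle expression is a typo in the statement itself; your proof in effect corrects it, and a careful write-up should say so explicitly rather than leave the discrepancy unremarked.
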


Many other enumerative formulas in Linear Algebra, Projective Geometry, etc. over finite fields $\F_q$ seem to keep meaningful inerpretations if $q$
tends to $1$, and this phenomenon (the various interpretations) suggests a deeper theory in characteristic one.

\medskip
\subsection{The Weyl functor}

In this chapter, we will consider various categories $\mathbf{C}$ of combinatorial objects, and in a first stage these objects will come 
with certain field data (later we will also consider categories were no obvious field data are available). We will look for a functor
$\underline{\mA}$ which associates with the objects $o$ of $\mathbf{C}$ interpretations of $o$ over the field with one element, $\Fun$, keeping in mind
that $\Fun$ does not exist, but $\underline{\mA}(o)$ {\em does}. In all those categories, expressions of the form
\begin{equation}
\underline{\mA}(o)\ \ +\ \ \mbox{field}\ \mbox{data}
\end{equation}
make sense, in that the knowledge of $\underline{\mA}(o)$ together with field data will single out uniquely defined objects in the $\underline{\mA}$-fiber of $o$. In principle, many objects in $\mathbf{C}$ could descend to some $\underline{\mA}(o)$, but with additional field data, we can point to a unique object. (Think, for instance, of the category $\mathbf{C}$ of projective spaces over finite fields  with natural morphisms; applying $\underline{\mA}$ to $o = \PG(n,\F_q)$ yields the aforementioned geometry $\PG(n,1)$ which is independent of $\F_q$, so the $\underline{\mA}$-fiber consists of all finite $n$-dimensional projective spaces. But giving the additional data of a single field yields a unique projective space coordinatized by this field.) So the functor $\underline{\mA}$ comes with a number of base extension arrows to fields, and together with these arrows, the originial theories can be reconstructed from below.

Since we will consider many different categories $\mathbf{C}$, we want $\underline{\mA}$ to  be defined in such a way that it commutes with
various natural functors between these categories, an  example of this prinicple being the diagram
\begin{equation}
\begin{array}{ccc}
\{ \PG(n,\F_q) \vert n, q \} &\overset{\underline{\mA}}\longrightarrow &\{ \PG(n,\Fun) \vert n \}\\
&&\\
\downarrow& &\downarrow\\
&&\\
\{ \PGL_{n + 1}(q) \vert n,q\} &\overset{\underline{\mA}}\longrightarrow &\{\mathbf{S}_{n + 1} \vert n \}\\
&&\\
\end{array}
\end{equation}
which we already considered.

 \section{BN-Pairs and the Weyl functor}

Before introducing the general concepts of building and BN-pair, we study the standard example of projective spaces (from the building point of view).\\

\subsection{Projective space}

Let $\mathbf{R}$ be a division ring (= skew field), let $n \in \mathbb{N}$, and let $V = V(n,\mathbf{R}) = \mathbf{R}^n$\index{$V(n,\mathbf{R})$} be the $n$-dimensional (left or right) vector space over $\mathbf{R}$. We define the {\em $(n - 1)$-dimensional (left or right) projective space}\index{projective!space} $\PG(n,\mathbf{R})$\index{$\PG(n,\mathbf{R})$} as being the set
\begin{equation}
(\mathbf{R}^n \setminus \{0\})/\sim,
\end{equation}
where the equivalence relation ``$\sim$'' is defined by (left or right) proportionality, with the subspace structure being induced by that of $V$. (When $n$ is not finite, similar definitions hold.) The choice of ``left'' or ``right'' does not affect the isomorphism class.
If $\mathbf{R} = \F_q$ is the finite field with $q$ elements ($q$ a prime power), we also write $\PG(n - 1,q)$\index{$\PG(n - 1,q)$} instead of 
$\PG(n - 1,\F_q)$. Sometimes the notations $\hP^{n - 1}(\mathbf{R})$\index{$\hP^{n - 1}(\mathbf{R})$}, $\hP^{n - 1}(q)$\index{$\hP^{n - 1}(q)$}, $\mathbb{P}^{n - 1}(\mathbf{R})$\index{$\mathbb{P}^{n - 1}(\mathbf{R})$} and $\mathbb{P}^{n - 1}(q)$\index{$\mathbb{P}^{n - 1}(q)$} occur as well.

There is also a notion of {\em axiomatic projective space}\index{axiomatic!projective space}, which is defined to be an {\em incidence geometry} (defined later in this section) which is governed by certain axioms, which are (of course) satisfied by ``classical'' projective spaces over division rings. A truly remarkable thing is that Veblen and Young \cite{VY} showed that if the dimension $n - 1$ of such a space is at least three, it {\em is} isomorphic to some $\PG(n - 1,\mathbf{R})$. And this is well-known {\em not} to be true when the dimension is less than three. \\

\subsection{Representing spaces as group coset geometries}

Let $\hP$ be a finite\\ 
($n$-)dimensional projective space over some division ring $\mathbf{R}$. 
Consider any $\mathbf{R}$-base $\hB$. Define a simplicial complex (in the next section to be formally defined, and called ``apartment''\index{apartment}) $\mC \equiv \mC(\hB)$, by letting it be the set of all possible subspaces of $\hP$ generated by 
subsets of $\hB$. (Let it also contain the empty set.) Define a (maximal) ``flag''\index{flag} or {\em chamber}\index{chamber} in $\mC$ as a maximal chain (so of length $n + 1$) of subspaces
in $\mC$. Let $F$ be such a fixed flag.

Consider the special projective linear group
$K := \PSL_{n + 1}(\mathbf{R})$ of $\mathbf{P}$. 
Then note that $K$ acts transitively on the pairs $(\mC(\hB'),F')$,
where $\hB'$ is any $\mathbf{R}$-base and $F'$ is a maximal flag in $\mC(\hB')$.

Put $B = K_{\mC}$ and $N = K_{F}$; then note the following properties:

\begin{itemize}
\item[(i)] 
$\langle B,N \rangle = K$;
\item[(ii)]
put $H = B \cap N
\lhd N$ and $N/H = W$;
then $W$ obviously is isomorphic to the symmetric group $\mathbf{S}_{n + 1}$ on $n + 1$ elements. Note that a presentation of $\mathbf{S}_{n + 1}$ is:
\begin{equation}
\langle   s_i \vert s_i^2 = \1, (s_is_{i + 1})^3 = \1, (s_is_j)^2 = \1,     i, j \in \{1,\ldots,n + 1\}, j \ne i \pm 1          \rangle.
\end{equation}
\item[(iii)]
$Bs_iBwB \subseteq BwB \cup
Bs_iwB$ whenever $w \in W$ and $i\in\{1,2,\ldots,n + 1\}$;
\item[(iv)]
$s_iBs_i \ne B$ for all $i\in\{1,2,\ldots,n + 1\}$.
\end{itemize}

(Here, expressions such as $BwB$ mean $B\widetilde{w}HB$, where $\widetilde{w}$ is any representant of $\widetilde{w}H = w$.)

Now let $K \cong \PSL_{n + 1}(\mathbf{R})$ be as above, and suppose that $B$ and $N$ are groups satisfying these properties. Define a geometry $\mB_{(K,B,N)}$\index{$\mB_{(K,B,N)}$} as follows. 

\begin{itemize}
\item[(B1)]
Its elements are left cosets in $K$ of the groups $P_i$ which properly contain $B$ and are different from $K$, $i = 1,\ldots,n + 1$;
\item[(B2)]
two elements $gP_i$ and $hP_j$ are incident if they intersect nontrivially.\\
\end{itemize}

\begin{proposition}
$\mB_{(K,B,N)}$ is isomorphic to $\PG(n,\mathbf{R})$.
\end{proposition}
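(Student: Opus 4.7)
The plan is to produce a type-preserving bijection between the coset geometry $\mB_{(K,B,N)}$ and $\PG(n,\mathbf{R})$ by realizing the maximal parabolics $P_i$ as stabilizers of the subspaces of the fixed flag $F$, then translating the ``nontrivial intersection'' incidence into subspace incidence via orbit–stabilizer.

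First I would fix notation. Write the chamber $F$ as a maximal chain
\begin{equation}
\emptyset = V_0 \subset V_1 \subset \cdots \subset V_n \subset V_{n+1} = \mathbf{R}^{n+1}
\end{equation}
obtained from the basis $\hB$, with $\dim V_i = i$ as a vector subspace. The Borel $B$ (i.e.\ the pointwise stabilizer of $F$ modulo scalars) is contained in each subgroup $P_i := K_{V_i}$, the stabilizer in $K = \PSL_{n+1}(\mathbf{R})$ of the single subspace $V_i$, and I would verify that these $n$ subgroups (one per possible projective dimension $i-1 \in \{0,\dots,n-1\}$) are exactly the proper overgroups of $B$ in $K$, since a subgroup containing $B$ is determined by the subset of generators $s_j$ it contains, and the ``gaps'' in that subset correspond to subspaces $V_i$ of the flag left fixed.

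Next I would construct the candidate isomorphism
\begin{equation}
\varphi: \mB_{(K,B,N)} \longrightarrow \PG(n,\mathbf{R}), \qquad gP_i \longmapsto g\cdot V_i.
\end{equation}
Well-definedness is immediate since $P_i$ stabilizes $V_i$. For bijectivity on elements of type $i$, I would invoke the transitive action of $\PSL_{n+1}(\mathbf{R})$ on the set of $i$-dimensional subspaces of $\mathbf{R}^{n+1}$ (a standard Witt-type extension argument using bases), together with orbit–stabilizer, giving
\begin{equation}
K/P_i \;\stackrel{\sim}{\longrightarrow}\; \{i\text{-dimensional subspaces of } \mathbf{R}^{n+1}\}.
\end{equation}

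Finally, the heart of the argument is matching incidences. For types $i < j$, I would show
\begin{equation}
gP_i \cap hP_j \neq \emptyset \iff gV_i \subset hV_j,
\end{equation}
by the following chain: the intersection is nonempty iff $h^{-1}g \in P_j P_i^{-1} = P_j P_i$, iff there exists $k \in K$ with $kV_i = gV_i$ and $kV_j = hV_j$, which (since $V_i \subset V_j$) forces $gV_i \subset hV_j$; conversely, any incident pair $(U_i, U_j)$ lies in the $K$-orbit of $(V_i, V_j)$ by the transitivity of $K$ on flags of a prescribed type, yielding the required $k$. This matches the incidence relation of $\PG(n,\mathbf{R})$. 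The step I expect to require most care is the identification of the $P_i$ as the full stabilizers of $V_i$ (not merely as containing them): one has to rule out that a larger subgroup still lies strictly between $B$ and $K$, which is where the BN-pair properties (iii)–(iv) and the structure of parabolic subgroups of the symmetric group $W \cong \mathbf{S}_{n+1}$ come in via the standard Bruhat-style classification of overgroups of $B$.
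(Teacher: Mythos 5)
The paper states this proposition without any proof, so there is nothing to compare your argument against; on its own merits, your proposal is the standard and correct identification, via $gP_i \mapsto gV_i$, of the coset geometry with the subspace geometry, and the two substantive steps you isolate (that $P_i$ is the \emph{full} stabilizer of $V_i$, via the Bruhat classification of overgroups of $B$, and the equivalence $gP_i \cap hP_j \neq \emptyset \iff gV_i \subset hV_j$, via transitivity of $K$ on flags of a prescribed type) are exactly the right ones. Two small corrections. First, your claim that the $n$ maximal parabolics are ``exactly the proper overgroups of $B$ in $K$'' is false for $n \geq 3$: the subgroups strictly between $B$ and $K$ are all $2^{n}-2$ standard parabolics $P_I = BW_IB$ with $\emptyset \neq I \subsetneq S$, of which only the $n$ maximal ones $P_{S\setminus\{s_i\}}$ are stabilizers of single subspaces; your own final paragraph states the correct classification, so this is an internal inconsistency rather than a fatal error, but it matters for reading the paper's clause (B1) correctly --- if one took \emph{all} proper overgroups as the $P_i$, the resulting coset geometry would be the full flag geometry of $\PG(n,\mathbf{R})$, not $\PG(n,\mathbf{R})$ itself, and the proposition would be false as stated. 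Second, note that the paper sets $B = K_{\mC}$ and $N = K_F$, i.e.\ with the roles of flag and apartment stabilizer transposed relative to the convention you (correctly) use; this is evidently a slip in the paper, since otherwise $N/(B\cap N) \cong \mathbf{S}_{n+1}$ and the listed axioms (iii)--(iv) would fail, but it is worth saying explicitly that you are reading $B$ as the stabilizer of the chamber $F$.
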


\medskip
\subsubsection{Low dimensional cases}

For dimension $n = 1$, our definition of axiomatic space doesn't make much sense. 
Here we rather {\em start} from a division ring $\mathbf{R}$, and define $\hP$, the {\em projective line} over $\mathbf{R}$, as being the set $(\mathbf{R}^2 \setminus \{0\})/\sim$,
where $\sim$ is defined by (left) proportionality. So we can write

\begin{equation}
\hP = \{(0,1)\} \cup \{(1,\ell) \vert \ell \in \mathbf{R}\}.
\end{equation}
Now $\PSL_2(\mathbf{R})$ acts naturally on $\mathbf{P}$; in fact, we have defined the projective line as a permutation group equipped with the natural doubly transitive action
of $\PSL_2(\mathbf{R})$.
Defining a geometry as we did for higher rank projective spaces,
through the ``$(B,N)$-pair structure''
of $\PSL_2(\mathbf{R})$, one obtains the same notion of projective line.\\

Restricting to finite fields, we obtain the following very simple

\begin{proposition}
A finite projective line has $q + 1$ points, for some prime power $q$.
\end{proposition}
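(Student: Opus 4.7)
The plan is to unwind the definition of projective line given just above the proposition and then invoke a classical structure theorem for finite division rings. First I would observe that, by the explicit description
\begin{equation}
\hP = \{(0,1)\} \cup \{(1,\ell) \mid \ell \in \mathbf{R}\},
\end{equation}
the cardinality of the projective line over $\mathbf{R}$ is exactly $|\mathbf{R}| + 1$. So ``$\hP$ is finite'' forces $\mathbf{R}$ to be a finite division ring, and conversely the size of $\hP$ is completely controlled by the size of $\mathbf{R}$.

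Next I would reduce the problem to determining the possible cardinalities of finite division rings. By Wedderburn's little theorem, every finite division ring is commutative, hence is a finite field. A finite field has order $p^m$ for some prime $p$ and some positive integer $m$ (this is standard: the prime subfield is $\F_p$ where $p$ is the characteristic, and the whole ring is a finite-dimensional $\F_p$-vector space). Setting $q = |\mathbf{R}| = p^m$, we conclude that $|\hP| = q + 1$ with $q$ a prime power.

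The only genuinely non-trivial input here is Wedderburn's theorem; everything else is bookkeeping from the definition of $\hP$. So the ``hard part'' is not really hard within this exposition — it is simply the citation of Wedderburn. I would mention it explicitly so that the reader sees why the prime-power conclusion is not automatic from the set-theoretic definition of a division ring, and then state the count $q+1$ as an immediate corollary.
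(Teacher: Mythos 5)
Your proof is correct and follows essentially the same route as the paper, which simply counts $\vert \hP \vert = \vert \mathbf{R} \vert + 1$ from the explicit description of the projective line and then restricts to finite fields. Your explicit invocation of Wedderburn's little theorem to pass from finite division rings to finite fields is a careful addition (the paper sidesteps it by saying ``restricting to finite fields''), but it does not change the substance of the argument.
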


The $2$-dimensional case is different, still. Here, other than in the $1$-dimensional case, one obtains a nontrivial geometry; the axioms now boil down to just demanding that each two different points are incident with precisely one line, that, dually, any two distinct lines intersect in precisely one point, and there exist four distinct points of which no three are on the same line. So we need not require additional algebraic structure in order to have interesting objects. Here we cannot say much about the order of the plane a priori.

\subsubsection{Representation by diagram}

We represent the presentation of $\mathbf{S}_{n + 1}$ as above
in the following way (this will be explained in more detail in the next section):

\bigskip
$\mathbf{A}_{n + 1}$: \begin{tikzpicture}[style=thick, scale=1.5]
\foreach \x in {1,2,3,5,6}{
\fill (\x,0) circle (2pt);}

\draw (1,0) -- (2,0);
\draw (2,0) -- (3,0);
\draw (3,0) -- (3.5,0);
\draw (4.5,0) -- (5,0);
\draw (5,0) -- (6,0);
\draw (4,0) node {$\dots$} ;

\end{tikzpicture} \hspace{0.5cm} ($n\geq 0$)\\

(The number of vertices is $n + 1$ | each vertex corresponding to an
involution in the generating set of involutions | and
we have an edge between vertices $s_i$ and $s_j$
if and only if $\vert j-i\vert=1$.)\\

\subsection{Simplicial complexes} 
 
 Recall that a (combinatorial) {\em simplicial complex}\index{simplicial complex} is a pair $(\mS,Y)$, where $Y$ is a set and $\mS \subseteq 2^Y$, such that $Y \in \mS$ and 
\begin{equation}
U \subseteq V \in \mS \Longrightarrow U \in \mS.
\end{equation}
We are ready to introduce buildings. We will not provide each result with a specific reference | rather, we refer the reader to \cite{AbBr}.\\

\subsection{Combinatorial definition}

A {\em chamber geometry}\index{chamber!geometry} is a geometry 
\begin{equation}
\Gamma = (\mC_1,\mC_2,\ldots,\mC_j,\I)
\end{equation}
 of rank $j$  (so $\Gamma$ has $j$ different
kinds of elements and $\I$ is an incidence relation between the elements such that no two elements belonging to the
same $\mC_i$, $1 \leq i \leq j$, can be incident) so that the simplicial complex $(\mC,X)$, where $\mC = \cup_{i = 1}^{j}\mC_i$
and $S \subseteq \mC$ is contained in $X$ if and only if every two distinct elements of $S$ are incident, is a chamber
complex (as in, e.g., \cite{POL}).
A {\em building}\index{building} $(\mC,X)$ is a thick chamber geometry $(\mC_1,\mC_2,\ldots,\mC_j,\I)$
of rank $j$, where $\mC = \cup_{i = 1}^j\mC_i$,
together with a set $\mA$ of thin chamber subgeometries, so that:

\begin{itemize}
\item[(i)]
every two chambers are contained in some element of $\mA$;
\item[(ii)]
for every two elements $\Sigma$ and $\Sigma'$ of $\mA$ and every two simplices $F$ and $F'$, respectively contained in $\Sigma$ and
$\Sigma'$, there exists an isomorphism $\Sigma \mapsto \Sigma'$ which fixes all elements of both $F$ and $F'$.
\end{itemize}

If all elements of $\mA$ are finite, then the building is called {\em spherical}\index{spherical!building}. Elements of $\mA$ are called {\em apartments}\index{apartment}. \\

\subsection{Coxeter groups and systems}

We need to introduce the notions of ``Coxeter system'' and ``Coxeter diagram''.\\

\subsubsection{Coxeter groups}
A {\em Coxeter group}\index{Coxeter!group} is a group with a presentation of type 

\begin{equation}
\langle s_1,s_2,\ldots,s_n \vert (s_is_j)^{m_{ij}} = \1 \rangle,
\end{equation}

\noindent
where $m_{ii} = 1$ for all $i$, $m_{ij} \geq 2$ for $i \ne j$, and $i, j$ are natural numbers bounded above by the natural number $n$. If $m_{ij} = \infty$, no relation of the form
$(s_is_j)^{m_{ij}}$ is imposed. All generators in this presentation are involutions. The natural number $n$ is the {\em rank}\index{rank} of the Coxeter group.

A {\em Coxeter system}\index{Coxeter!system} is a pair $(W,S)$, where $W$ is a Coxeter group and $S$ the set of generators defined by the presentation.

Recall that a {\em dihedral group}\index{dihedral group} of {\em rank}\index{rank} $n$, denoted $\mathbf{D}_n$\index{$\mathbf{D}_n$}, is the symmetry group of a regular $n$-gon in the real plane. \\

\subsubsection{Coxeter matrices}

A square $n \times n$-matrix $(M)_{ij}$ is a {\em Coxeter matrix}\index{Coxeter!matrix} if it is symmetric and defined over $\mathbb{Z} \cup \{\infty\}$, has only $1$s on the diagonal, and has $m_{ij} \geq 2$ if $i \ne j$. 
Starting from a Coxeter matrix $(M)_{ij}$, one can define a Coxeter group $\langle s_1,s_2,\ldots,s_n \vert (s_is_j)^{m_{ij}} = \1 \rangle$, and conversely.
Different Coxeter systems can give rise to the same Coxeter group, even if the rank is different. \\

\subsubsection{Coxeter diagrams}

Let $(W,S)$ be a Coxeter system. Define a graph, called ``Coxeter diagram''\index{Coxeter!diagram}, as follows. Its vertices are the elements of $S$. If $m_{ij} = 3$, we draw a single edge between $s_i$ and $s_j$; if $m_{ij} = 4$, a double edge, and if $m_{ij} \geq 5$, we draw a single edge with label $m_{ij}$. If $m_{ij} = 2$, nothing is drawn.
If the Coxeter diagram is connected, we call $(W,S)$ {\em irreducible}\index{irreducible!Coxeter diagram}. If it has a finite number of vertices, we call $(W,S)$ {\em spherical}\index{spherical!Coxeter diagram}.\\

The irreducible, spherical Coxeter diagrams were classified by H. S. M. Coxeter \cite{HSMC}; the complete list is the following.

\bigskip
$\mathbf{A}_n$\index{Coxeter!diagram!of type $\mathbf{A}_n$}: \begin{tikzpicture}[style=thick, scale=1.3]
\foreach \x in {1,2,3,5,6}{
\fill (\x,0) circle (2pt);}

\draw (1,0) -- (2,0);
\draw (2,0) -- (3,0);
\draw (3,0) -- (3.5,0);
\draw (4.5,0) -- (5,0);
\draw (5,0) -- (6,0);
\draw (4,0) node {$\dots$} ;

\end{tikzpicture} \hspace{0.5cm} ($n\geq 1$)\\

$\mathbf{B}_n = \mathbf{C}_n$\index{Coxeter!diagram!of type $\mathbf{C}_n$}\index{Coxeter!diagram!of type $\mathbf{B}_n$}: \begin{tikzpicture}[style=thick, scale=1.3]
\foreach \x in {1,2,3,5,6}{
\fill (\x,0) circle (2pt);}

\draw (1,0) -- (2,0);
\draw (2,0) -- (3,0);
\draw (3,0) -- (3.5,0);
\draw (4.5,0) -- (5,0);
\draw (5,0.035) -- (6,0.035);
\draw (5,-0.035) -- (6,-0.035);
\draw (4,0) node {$\dots$} ;

\end{tikzpicture}\hspace{0.5cm} ($n\geq 2$)\\

$\mathbf{D}_n$\index{Coxeter!diagram!of type $\mathbf{D}_n$}: \begin{tikzpicture}[style=thick, scale=1.3]
\foreach \x in {1,2,3,5,6}{
\fill (\x,0) circle (2pt);}

\fill (5,1) circle (2pt);

\draw (1,0) -- (2,0);
\draw (2,0) -- (3,0);
\draw (3,0) -- (3.5,0);
\draw (4.5,0) -- (5,0);
\draw (5,0) -- (6,0);
\draw (5,0) -- (5,1);
\draw (4,0) node {$\dots$} ;

\end{tikzpicture}\hspace{0.5cm}  ($n\geq 4$)\\

$\mathbf{E}_n$\index{Coxeter!diagram!of type $\mathbf{E}_n$}: \begin{tikzpicture}[style=thick, scale=1.3]
\foreach \x in {0,2,3,4}{
\fill (\x,0) circle (2pt);}

\fill (2,1) circle (2pt);

\draw (0,0) -- (.5,0);
\draw (1.5,0) -- (2,0);
\draw (2,0) -- (3,0);
\draw (3,0) -- (4,0);
\draw (2,0) -- (2,1);
\draw (1,0) node {$\dots$} ;

\end{tikzpicture}
\hspace{0.5cm}  ($n=6,7,8$)\\

$\mathbf{F}_4$\index{Coxeter!diagram!of type $\mathbf{F}_4$}: \begin{tikzpicture}[style=thick, scale=1.3]
\foreach \x in {0,1,2,3}{
\fill (\x,0) circle (2pt);}

\draw (0,0) -- (1,0);
\draw (2,0) -- (3,0);
\draw (1,0.035) -- (2,0.035);
\draw (1,-0.035) -- (2,-0.035);

\end{tikzpicture}\\

$\mathbf{H}_3$\index{Coxeter!diagram!of type $\mathbf{H}_3$}: \begin{tikzpicture}[style=thick, scale=1.3]
\foreach \x in {0,1,2}{
\fill (\x,0) circle (2pt);}

\draw (0,0) -- (1,0);
\draw (1,0) -- (2,0);
\draw (1.5,.25) node {$5$} ;

\end{tikzpicture}\\

$\mathbf{H}_4$\index{Coxeter!diagram!of type $\mathbf{H}_4$}: \begin{tikzpicture}[style=thick, scale=1.3]
\foreach \x in {-1,0,1,2}{
\fill (\x,0) circle (2pt);}
\draw (-1,0) -- (0,0);
\draw (0,0) -- (1,0);
\draw (1,0) -- (2,0);
\draw (1.5,.25) node {$5$} ;

\end{tikzpicture}\\

$\mathbf{I}_2(m)$\index{Coxeter!diagram!of type $\mathbf{I}_2(m)$}: \begin{tikzpicture}[style=thick, scale=1.3]
\foreach \x in {1,2}{
\fill (\x,0) circle (2pt);}

\draw (1,0) -- (2,0);
\draw (1.5,.25) node {$m$} ;

\end{tikzpicture}\hspace{0.5cm}  ($m\geq 5$)\\

\medskip
\subsection{Incidence geometries}

 Having met some standard examples of incidence geometries, we now 
 introduce general incidence geometries\index{incidence!geometry} in a formal way. These objects will be our way to approach 
 combinatorial geometries in the present chapter.

 An {\em incidence geometry}\index{incidence!geometry} or  {\em Buekenhout-Tits geometry}\index{Buekenhout-Tits!geometry} consists of a set $X$ of {\em objects}\index{objects}
provided with a symmetric relation $\I$ called {\em incidence}\index{incidence}
and a surjective function 

\begin{equation}
t:X \longrightarrow I 
\end{equation}
that assigns a {\em type}
to each object, such that two objects of the same type are never incident.
The set $I$ is the set of {\em types}\index{type}. The cardinality $|I|$
is called the {\em rank}\index{rank!of incidence geometry} of the geometry.

Denote the geometry by $\Gamma = \Gamma(X,\I,I,t)$.

If the rank is two, we also speak of a {\em point-line geometry}\index{point-line geometry} (the assignment
\begin{equation}
\I\ \ \longrightarrow\ \  \{\mbox{point},\mbox{line} \}
\end{equation}
being bijective but arbitrary).

\subsubsection{Geometries as incidence graphs}

It will be particularly important in the $\mathbb{F}_1$-context
 to see incidence geometries as a kind of graph. An extra feature which comes in handy is that essentially (or better: usually) the automorphism group $A$ of the 
 geometry is the same as the automorphism group $B$ of the associated graph. In any case, $A \leq B$, and if 
 $[B : A] \ne 1$, then this quantity is a measure for the number of types of objects that play the same role.\\

An incidence geometry can be viewed as a multipartite graph $\Gamma$
with vertex set $X$ and partition $\{X_i \mid i \in I\}$
(with $X_i = t^{-1}(i)$), with incidence taken as adjacency.\\

The geometry $\Gamma$ is called {\em connected}\index{connected incidence geometry} when the graph $\Gamma$
is connected. The graph without vertices is not connected:
a connected graph has precisely one connected component, while the graph
without vertices has no connected component.

A {\em flag}\index{flag} $F$ in $\Gamma$ is a clique, which is by definition a complete subgraph. No two elements of a flag
have the same type. The {\em rank}\index{rank!of flag} of $F$ is $|t(F)|$ (that is, $|F|$).
The {\em corank}\index{corank of a flag} of $F$ is $| I \setminus t(F)|$.
The {\em residue}\index{residue} ${\rm Res}(F)$ (also written $\Gamma_F$) is the geometry
with set of objects $Y = \{y \in X \setminus F \mid F \cup \{y\}
~\mbox{\rm is a flag}\}$, incidence inherited from $\Gamma$,
set of types $I \setminus t(F)$, and type function inherited from $\Gamma$.
The geometry $\Gamma$ is called {\em residually connected}\index{residually connected} when every residue
of rank at least two is connected (and hence nonempty), and every residue
of rank one is nonempty.\\

\medskip
\subsection{BN-Pairs and buildings}

A group $G$ is said to have
a {\em BN-pair}\index{BN-pair} $(B,N)$, where $B, N$ are subgroups of $G$, if:

\begin{itemize}
\item[(BN1)] 
$\langle B,N \rangle = G$;
\item[(BN2)] 
$H = B \cap N
\lhd N$ and $N/H = W$ is a Coxeter group with distinct generator set of involutions
$S = \{ s_j \vert j \in J \}$;
\item[(BN3)] 
$BsBwB \subseteq BwB \cup
BswB$ whenever $w \in W$ and $s\in S$;
\item[(BN4)] 
$sBs \ne B$ for all $s\in S$.
\end{itemize}

The group $B$, respectively $W$, is a {\em Borel subgroup}\index{Borel subgroup},
respectively the {\em Weyl group}\index{Weyl!group}, of $G$. The  quantity $\vert S\vert$
is called the \emph{rank}\index{rank!of BN-pair} of the BN-pair. If $W$ is a finite group, the BN-pair is {\em spherical}\index{spherical!BN-pair}. It is {\em irreducible}\index{irreducible!BN-pair} if the corresponding Coxeter system is.
Sometimes we call $(G,B,N)$ also a {\em Tits system}\index{Tits system}.\\

\begin{remark}
{\rm
Asking that $W$ is a Coxeter group is in fact redundant; by the other axioms and the fact that $S$ consists of involutions, it is not hard to show that $W$ {\em must be} a Coxeter group, and that $S$ is uniquely determined as the set of elements in $W^\times$ for which
\begin{equation}
B \cup BsB
\end{equation}
is a group.
}
\end{remark}

\subsubsection{Buildings as group coset geometries}

To each Tits system $(G,B,N)$ one can associate a building $\mB_{(G,B,N)}$ in a natural way, through a group coset construction. For that reason we introduce the standard {\em parabolic subgroups}\index{parabolic subgroup}; these are just the proper subgroups of $G$ which properly contain $B$. Let $I \subset J$, and define 
\begin{equation}
W_I := \langle s_i \vert i \in I \rangle \leq W.
\end{equation}

Then 
\begin{equation}
P_I := BW_IB
\end{equation}
is a subgroup of $G$ which obviously contains $B$, and vice versa it can be shown that any standard parabolic subgroup has this form.

We are ready to introduce $\mB_{(G,B,N)}$.

\begin{itemize}
\item[(B1)]
\textsc{Elements}: (or ``subspaces'' or ``varieties'') are elements of the left coset spaces $G/P_I$, $\emptyset \ne I \subset J \ne I$.
\item[(B2)]
\textsc{Incidence}: $gP_I$ is incident with $hP_L$, $I \ne L$, if these cosets intersect nontrivially.
\end{itemize}

The {\em rank}\index{rank!of a building} of $\mB_{(G,B,N)}$ is the rank of the BN-pair.
The building $\mB_{G,B,N}$ is {\em spherical}\index{spherical!building} when the BN-pair $(B,N)$ is; note that this is in accordance with the aforementioned synthetic definition of ``spherical building'' (taken that
there is already a BN-pair around). It is {\em irreducible}\index{irreducible!building} when $(B,N)$ is irreducible.\\

\subsubsection{$G$ as an automorphism group}

The group $G$ acts as an automorphism group, by multiplication on the left, on $\mB_{(G,B,N)}$.  The kernel  $K$ of this action is the biggest normal subgroup of $G$ contained in $B$, and is equal to
\begin{equation}
K=\bigcap_{g\in G}B^g.
\end{equation}

For the sake of convenience, suppose $J$ is the finite set $\{ 1,2,\ldots,n\}$, $n \in \mathbb{N} \setminus \{0\}$.
The group $G/K$ acts faithfully on $\mB_{(G,B,N)}$ and the stabilizer of the flag 
\begin{equation}
F = \{P_{\{1\}},P_{\{1,2\}},\ldots,P_J\}
\end{equation}
is $B/K$. If $K = \{\1\}$, we say that the Tits system is {\em effective}\index{effective Tits system}.\\

Let $\Sigma$ be an apartment of $\mB_{(G,B,N)}$, and let its elementwise stabilizer be $E$; then $NE$ is the global
stabilizer of $\Sigma$. 
We can write

\begin{equation}
E = \bigcap_{w\in W}B^w.
\end{equation}

The next theorem sums up several properties.

\begin{theorem}[\cite{AbBr,Titslect}] \label{Titssystem} Let $(G,B,N)$ be a Tits system with Weyl group $W$. Then the geometry $\mB_{(G,B,N)}$
is a Tits building. Setting
\begin{equation}
K=\bigcap_{g\in G}B^g\mbox{ and } E=\bigcap_{w\in W}B^w, \end{equation}
we have that $G/K$
acts naturally and faithfully by left translation on
$\mB_{(G,B,N)}$. Also, $B$ is the stabilizer of a unique flag $F$
and $NE$ is the stabilizer of a unique apartment containing $F$,
and the triple $(G/K,B/K,NE/K)$ is a Tits system
associated with $\mB_{(G,B,N)}$. 
Moreover, $G/K$ acts transitively on the sets $(A,F')$, where $A$ is an apartment and $F'$ is a maximal flag (chamber) in $A$.
\end{theorem}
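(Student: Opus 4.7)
The spine of the proof is the Bruhat decomposition $G = \bigsqcup_{w \in W} BwB$, which I would first extract from (BN1)--(BN4). Axiom (BN3) inductively produces $B s_{i_1} \cdots s_{i_k} B \subseteq \bigcup_{v} BvB$ for any expression $w = s_{i_1}\cdots s_{i_k}$, so the union $\bigcup_{w \in W} BwB$ is closed under multiplication on either side by $B$ and by each generator; containing $B$ and $N$, it must equal $G$ by (BN1). Disjointness of the cells is the standard argument from (BN4): if $Bw_1 B = Bw_2 B$ with $w_1 \ne w_2$, multiply by a generator $s$ reducing length in one of the factors to derive $sBs = B$. Once Bruhat is in place, routine manipulations show that every subgroup containing $B$ is uniquely of the form $P_I = BW_I B$ and that $P_I \cap P_L = P_{I \cap L}$.

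Next I would construct the building and its apartments concretely. Take the \emph{standard chamber} $F$ to be the flag $\{P_I : \emptyset \ne I \subsetneq J\}$: these cosets pairwise intersect (they all contain $e$) and form a maximal simplex, with elementwise stabilizer $\bigcap_I P_I = P_\emptyset = B$. Consequently the kernel of the $G$-action on the chamber set is $\bigcap_{g \in G} gBg^{-1} = K$, so $G/K$ acts faithfully by left translation. Declare the \emph{standard apartment} $\Sigma$ to be $\{nF : n \in N\}$; by (BN2) its chambers are in bijection with $W$. The pointwise stabilizer of $\Sigma$ is $\bigcap_{n \in N} n B n^{-1}$, which (using that $H$ is normalized by $N$ and contained in $B$) equals $\bigcap_{w \in W} B^w = E$. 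The full set-stabilizer of $\Sigma$ is $NE$: any $g$ stabilizing $\Sigma$ permutes its chambers as some $w \in W$, so after multiplying by a representative $n \in N$ the product lies in the pointwise stabilizer $E$.

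With these pieces, verifying the building axioms is largely bookkeeping. Axiom (i)---any two chambers lie in a common apartment---reduces, by $G$-transitivity on chambers (which is Bruhat together with the uniqueness of the chamber fixed by $B$), to the case where one chamber is $F$; the second is $gF$ for $g \in BwB$, and after adjusting on the left by $B$ the chamber lies in $\Sigma$. Passage to $G/K$ is then straightforward: $K \lhd G$, $K \subseteq B \subseteq E$, so (BN1)--(BN4) pass verbatim to $G/K$ with $B/K$ and $NE/K$; the Weyl quotient $NE/(NE \cap B) = NE/HE$ is canonically $N/H = W$ because $H = N \cap E$ (every element of $N \cap E$ lies in $N \cap B = H$, and conversely $H \subseteq E$ since $N$ normalizes $H \subseteq B$).

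Finally, the transitivity of $G/K$ on pairs $(A, F')$ combines two ingredients: transitivity on chambers (Bruhat once more), and transitivity of the chamber stabilizer $B$ on the apartments through that chamber (every such apartment is a translate of $\Sigma$ by an element fixing $F$). The main obstacle I foresee is axiom (ii), the \emph{strong} apartment axiom demanding an isomorphism of apartments fixing two prescribed simplices simultaneously. The cleanest route is to identify the chambers of $\Sigma$ with the elements of $W$, to show that minimal galleries correspond to reduced expressions, and to use the exchange condition in the Coxeter group $W$ to match minimal galleries in the two apartments gallery by gallery; one then transports this back to arbitrary apartments using $G$-conjugacy. Everything outside this Coxeter-theoretic step is formal manipulation of Bruhat cells and standard parabolics.
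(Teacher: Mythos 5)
The paper does not prove this theorem at all: it is stated as a citation to Abramenko--Brown and to Tits's lecture notes, so there is no in-paper argument to compare yours against. Your outline is, in substance, the standard proof from those references: Bruhat decomposition from (BN1)--(BN4), the classification of subgroups containing $B$ as the $P_I = BW_IB$, the standard chamber and apartment with stabilizers $B$ and $NE$, faithfulness of $G/K$, and the reduction of the building axioms and the transitivity statement to Bruhat cells. Two small points. First, you wrote $K \subseteq B \subseteq E$; the correct chain is $K \subseteq E \subseteq B$, since $E = \bigcap_{w\in W} B^w$ is an intersection of conjugates of $B$ over a smaller indexing set than $K$ and in particular sits inside $B$ (taking $w = 1$). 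Your subsequent use of these inclusions is consistent with the corrected version, so this reads as a slip rather than a gap. Second, you correctly flag axiom (ii) (the isomorphism of apartments fixing two prescribed simplices) as the genuinely nontrivial step and name the right machinery --- identification of chambers of $\Sigma$ with $W$, minimal galleries versus reduced words, the exchange condition, and transport by $G$-conjugacy --- but you do not carry it out; in Abramenko--Brown this is done via the retraction $\rho_{\Sigma,C}$ of the whole building onto an apartment centered at a chamber, which is the cleanest way to produce the required isomorphism and which you may want to make explicit. As a proof sketch of a cited classical result, what you have is sound.
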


The Tits system $(G,B,N)$ is called
\emph{saturated}\index{saturated} precisely when
$N=NE$, with $E$ as above. Replacing $N$ by $NE$, every Tits
system is ``equivalent'' to a saturated one.\\

\subsubsection{Bruhat decomposition}

Let $G$ be a group with a spherical, saturated, effective BN-pair $(B,N)$. Then the ``Bruhat decomposition''\index{Bruhat decomposition} tells us that 

\begin{equation}
G = BWB = \coprod_{w \in W}BwB,
\end{equation}
where $W = N/(B \cap N)$ is the Weyl group. Note that with $I \subset J$, we also have 

\begin{equation}
P_I = BW_IB = \coprod_{w \in W_I}BwB.
\end{equation}

\subsubsection{Classification of BN-pairs}

 If the rank of an abstract spherical building is at least $3$, Tits showed in a celebrated work
 \cite{Titslect} that it is always associated to a BN-pair in the way explained above, and this deep observation led him eventually to classify all spherical BN-pairs of rank $\geq 3$ (cf. \cite[11.7]{Titslect}).
 
So Tits realized a far reaching generalization of the Veblen-Young theorem for spherical buildings, which roughly could be formulated as follows.

\begin{theorem}[Classification of spherical buildings | Tits \cite{Titslect}]
An
irreducible spherical building of rank at least $3$ arises from a simple algebraic group (of relative rank at least $3$) over an arbitrary division ring.\\
\end{theorem}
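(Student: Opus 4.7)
The plan is to reduce the classification to local (rank~$2$) data and then reconstruct an algebraic group from a root-subgroup structure on these residues. The first step is structural: in any building, the residue of a simplex of corank~$k$ is itself a building of rank~$k$. Hence irreducibility propagates through the connectedness of the spherical Coxeter diagrams in the classification list, the rank~$2$ residues are thick generalized $m$-gons (projective planes, quadrangles, hexagons or octagons), and the rank~$3$ residues are governed by $\mathbf{A}_3$, $\mathbf{C}_3$ and (temporarily) $\mathbf{H}_3$.

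The second step — the core of \cite{Titslect} — is to show that in rank $\geq 3$ the building $\Delta$ is automatically \emph{Moufang}: for every root $\alpha$ of the Weyl group $W$ there exists a ``root group'' $U_\alpha$ of type-preserving automorphisms acting simply transitively on the apartments containing $\alpha$. This is established by an induction that produces fixpoint-rich automorphisms in a rank~$2$ residue and then extends them to $\Delta$, using that two rank~$2$ residues meeting in a panel are glued rigidly inside a rank~$3$ residue. From the family $\{U_\alpha\}_{\alpha\in\Phi}$ one reads off Bruhat-style commutation relations, yielding a root group datum (an RGD-system); the group $G := \langle U_\alpha \mid \alpha\in\Phi\rangle$ then carries a BN-pair whose associated coset geometry, by Theorem~\ref{Titssystem}, recovers $\Delta$.

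The third step is the case-by-case coordinatization of the rank~$2$ residues. For $\mathbf{A}_n$ $(n\geq 3)$ each $\mathbf{A}_2$-residue is a Moufang projective plane, hence Desarguesian by Veblen--Young, so $\Delta\cong\PG(n,\mathbf{R})$ for a division ring $\mathbf{R}$ and $G$ essentially equals $\PSL_{n+1}(\mathbf{R})$. For $\mathbf{B}_n=\mathbf{C}_n$ and $\mathbf{D}_n$ the Veldkamp--Tits theory of polar spaces coordinatizes $\Delta$ by a (skew-)hermitian or quadratic form over a division ring, producing the corresponding classical group. For the exceptional types $\mathbf{E}_6,\mathbf{E}_7,\mathbf{E}_8,\mathbf{F}_4$, the commutation relations among long and short root groups are so rigid (via triality-like identities forced by the diagram) that the RGD-system can only come from a Chevalley or twisted Chevalley group of the prescribed exceptional type, with octonionic or Jordan-algebraic input in the $\mathbf{F}_4$ case. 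Finally, $\mathbf{H}_3$ and $\mathbf{H}_4$ are ruled out: no thick Moufang generalized pentagon exists, so their commutation relations are inconsistent with a thick building.

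The main obstacle is the Moufang extension theorem — globalising root groups from rank~$2$ data — together with the non-existence of thick Moufang buildings of pentagonal type; these are by far the most delicate ingredients. Once they are in place, recovering a simple algebraic group from the RGD-system is essentially bookkeeping: identify the ground division ring from the $\mathbf{A}_2$-residues, read off the twist (if any) from the diagram automorphism induced on the root datum, and verify the axioms of a simple algebraic group of the recorded relative rank.
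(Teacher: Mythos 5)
The paper does not prove this theorem at all: it is quoted as a deep result of Tits, with a pointer to \cite[11.7]{Titslect}, so there is no in-paper argument to compare yours against. Judged on its own, your outline is a faithful roadmap of how the classification actually goes (residues of buildings are buildings; rank $\geq 3$ forces the Moufang property; coordinatize via rank~$2$ residues; rule out $\mathbf{H}_3$, $\mathbf{H}_4$), but it is a roadmap rather than a proof. The two ingredients you yourself single out as ``the main obstacle'' --- the extension theorem globalising root groups from rank~$2$ data, and the non-existence of thick Moufang generalized pentagons --- \emph{are} the theorem; each occupies a substantial part of \cite{Titslect} (and, for the polygon classification, the later Tits--Weiss work), and naming them as delicate does not discharge them. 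Likewise the Veldkamp--Tits coordinatization of polar spaces for types $\mathbf{B}_n$, $\mathbf{C}_n$, $\mathbf{D}_n$ is itself a long case analysis that you invoke as a black box.

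One concrete slip: for type $\mathbf{A}_n$ you write that each $\mathbf{A}_2$-residue is ``a Moufang projective plane, hence Desarguesian by Veblen--Young.'' Moufang does not imply Desarguesian --- the Moufang planes over octonion algebras are the standard counterexamples --- and Veblen--Young is not a statement about Moufang planes. The correct deduction is the one the paper itself records: an $\mathbf{A}_n$-building with $n\geq 3$ is an axiomatic projective space of dimension $\geq 3$, and it is the dimension hypothesis in Veblen--Young that forces $\Delta\cong\PG(n,\mathbf{R})$ for a genuine division ring $\mathbf{R}$ (associativity comes from the ambient rank, not from the Moufang condition on a single plane). The conclusion you want is true, but the route you state for it is a non sequitur. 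Finally, note that the theorem as formulated in the paper is explicitly ``rough'': the full classification also includes classical buildings from infinite-dimensional forms and groups of mixed type that are not algebraic groups in the strict sense, so any genuine proof has to either broaden the target statement or handle these cases separately.
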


\bigskip
\begin{tabular}{c | c}
 & \\
\mbox{\textsc{Projective}}\ \ \mbox{\textsc{spaces}} &\mbox{\textsc{Buildings}}\\
 & \\
\hline \\
\mbox{\textsc{Veblen-Young:}}   & \mbox{\textsc{Tits:}} \\
 & \\
\mbox{dim} $\geq 3$:  & \mbox{rank} $\geq 3$: \\
\mbox{vector}\ \ \mbox{spaces} &\mbox{BN-pairs;}\ \ \mbox{simple}\ \ \mbox{algebraic} \ \ \mbox{groups} \\
\mbox{over} \ \ \mbox{division} \ \ \mbox{rings}   & \mbox{over} \ \ \mbox{division} \ \ \mbox{rings}\\
 & \\
 \mbox{dim} $2$:  & \mbox{rank} $2$: \\
 \mbox{axiomatic}\ \ \mbox{projective}\ \ spaces &\mbox{generalized}\ \ \mbox{polygons} \\
  & \\
   & \\
\end{tabular}

\medskip
\subsection{The rank $2$ case}

Combinatorially, a  {\em generalized $n$-gon}\index{generalized!$n$-gon} ($n \geq 3$) is a point-line geometry $\Gamma = (\mP,\mB,\I)$
for which  the following axioms are satisfied:

\begin{itemize}
\item[(i)]
$\Gamma$ contains no ordinary $k$-gon (as a subgeometry), for $2 \leq k < n$;
\item[(ii)]
any two elements $x,y \in \mP \cup \mB$ are contained in some ordinary $n$-gon (as a subgeometry) in
$\Gamma$;
\item[(iii)]
there exists an ordinary $(n + 1)$-gon (as a subgeometry) in $\Gamma$.
\end{itemize}

A {\em generalized polygon}\index{generalized!polygon} (GP)\index{GP} is a generalized $n$-gon for some $n$. 

By (iii), generalized polygons have at least three points per line and three lines per point.  The generalized $3$-gons are precisely
the projective planes. A geometry $\Gamma$ which satisfies (i) and (ii) is a {\em weak generalized $n$-gon}\index{weak generalized $n$-gon}.
If (iii) is not satisfied for $\Gamma$, then $\Gamma$ is called {\em thin}\index{thin}.
Otherwise, it is called {\em thick}\index{thick}. Sometimes we will speak of ``thick (respectively thin)
generalized $n$-gon'' instead of ``thick (respectively thin) weak generalized $n$-gon''. 
Generalized polygons were introduced by Tits in his triality paper \cite{Tits};  the basic reference is \cite{POL}.\\

It is not hard to show that once (iii) is also satisfied for a weak generalized polygon, there are constants $s$ and $t$ such that 
each point is incident with $t + 1$ lines, and each line is incident with $s + 1$ points. If the polygons were to be classical (that is, {\em Moufang}\index{Moufang} \cite{POL}), then there are division rings $\K$ and $\mathbb{L}$ such that $s + 1 = \vert \K \vert + 1$ and $t + 1 = \vert \mathbb{L} \vert + 1$.
If (iii) is not satisfied, it can be shown that each line is incident with precisely $1 + 1$ points, so that thin generalized polygons are the polygons over $\Fun$. 
And thin generalized $n$-gons are nothing else than ordinary $n$-gons. We will come back to this in more detail.

Note that there are many equivalent definitions for the notion of generalized polygon\footnote{Think, e.g., of the definition of generalized $3$-gon and of axiomatic projective plane given in the introduction to this chapter.}, but the present one is very natural in the characteristic one context | we only use 
$\Fun$-polygons to describe the axioms.\\

The relation between buildings and generalized polygons,
as observed by Tits in \cite{Titslect} (see also \cite{POL}, \S 1.3.7, of which the notation is used), is as follows:

\begin{itemize}
\item[(S)]
{\em Suppose $(\mC,X)$, $\mC = \mC_1 \cup \mC_2$, is a spherical building of rank $2$. Then $\Gamma = (\mC_1,\mC_2,\I)$ is a
generalized polygon. Conversely, suppose that $\Gamma = (\mP,\mB,\I)$ is a generalized polygon, and let $\mF$ be the set of its flags.
Then $(\mP \cup \mB, \emptyset \cup \{ \{v\} \vert v \in \mP \cup \mB \} \cup \mF)$ is a chamber geometry of rank $2$.
Declaring the thin chamber geometry corresponding to any ordinary subpolygon an apartment, we obtain a spherical
building of rank $2$.}
\end{itemize}

\medskip
\subsubsection{Duality}

Interchanging the role of points and lines, that is, applying the map
\begin{equation}
\Gamma = (\mP,\mB,\I)\ \  \overset{D}\longrightarrow\ \  \Gamma^D = (\mB,\mP,\I),
\end{equation}
we obtain the (point-line) {\em dual}\index{dual} of $\Gamma$. It is also a GP (with the parameters switched).

\medskip
\subsubsection{Polygons as graphs}

Let $\mS = (\mP,\mB,\I)$ be a generalized $n$-gon. The {\em (point-line) incidence graph}\index{incidence!graph} $(V,E)$ of $\mS$ is defined by taking $V = \mP \cup \mB$, where an edge is drawn between vertices if the corresponding elements in $\mS$ are incident; $(V,E)$ then is a bipartite graph of diameter $n$ and girth $2n$. Vice versa, such graphs 
define GPs. 

Let the graph corresponding to $\mS$ be denoted by $\Gamma$.
We 
call $(x_0,\ldots,x_k)$ a {\em (simple) path}\index{path} if the $x_i$ are pairwise distinct and $x_i$ is adjacent to $x_{i+1}$ 
for $i = 0,\ldots,k - 1$. The natural graph theoretic distance function on $\Gamma$ is denoted 
by ``$\mathrm{d}$'' or sometimes ``$\mathrm{d}_n$''\index{$\mathrm{d}$}\index{$\mathrm{d}_n$}. The set of elements at distance $i$ from some element $x \in \Gamma$ is 
denoted by $\Gamma_i(x)$\index{$\Gamma_i(x)$}. Elements at distance $n$ are called {\em opposite}\index{opposite}.

\subsection{$\mathbb{F}_1$-Buildings and the Weyl functor}

We now define the Weyl functor, and
describe some of the examples Tits mentioned in \cite{anal}.\\

\subsubsection{The Weyl functor\index{Weyl!functor}}

Note again that since $\mathbb{F}_1$ expresses the idea of an Absolute Arithmetic, it is clear that the buildings of a certain prescribed type $\mathbf{T}$ over $\mathbb{F}_1$ should be present in any thick building of the same type $\mathbf{T}$.\\

Motivated by the properties which a building over $\mathbb{F}_1$ of type $\mathbf{T}$ should have, we are ready to define such geometries (and their groups) in general.
Let  $\mB = (\mC_1,\mC_2,\ldots,\mC_j,\I)$ be a thick building
of rank $j$ and type $\mathbf{T}$ (given by one of the Coxeter diagrams below), and let $\mA$ be its set of thin chamber subgeometries.
Suppose $(B,N)$ is a saturated effective BN-pair associated to $\mB$; its Weyl group $W$ is a Coxeter group defined by one of the Coxeter graphs below.

\begin{proposition}
A building of rank $j$ and type $\mathbf{T}$ defined over $\mathbb{F}_1$ is isomorphic
to any element of $\mA$. Its automorphism group is isomorphic to the  Coxeter group $W$.
\end{proposition}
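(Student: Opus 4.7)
The plan is to identify the $\mathbb{F}_1$-building of rank $j$ and type $\mathbf{T}$ with a single apartment $\Sigma \in \mA$, and then to compute its full automorphism group. First I would verify that any element of $\mA$ satisfies the desired ``absolute'' requirements. By the combinatorial definition of a building, each $\Sigma \in \mA$ is a thin chamber subgeometry, so every panel of $\Sigma$ carries exactly $1+1$ chambers, matching $\vert\Fun\vert + 1 = 2$. Moreover, axiom (i) of the building definition guarantees that $\Sigma$ sits inside $\mB$ as a thin substructure covering any prescribed chamber, which is the ``absolute mantra'' that an $\Fun$-model ought to embed into every thick realization of the same type $\mathbf{T}$.

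Next I would invoke axiom (ii): for any two apartments $\Sigma, \Sigma'$ there is an isomorphism $\Sigma \to \Sigma'$ fixing every simplex in $\Sigma \cap \Sigma'$. Applied to a common chamber, this yields that all apartments of $\mB$ are pairwise isomorphic as chamber complexes. Hence the ``building over $\Fun$'' is well-defined up to isomorphism as any element of $\mA$, and this is precisely the rank-$j$, type-$\mathbf{T}$ thin geometry we want.

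For the group-theoretic claim I would lean on Theorem \ref{Titssystem}. Since $(B,N)$ is saturated and effective, the setwise stabilizer in $G$ of a chosen apartment $\Sigma$ is $NE = N$, and the kernel of its action on $\Sigma$ is $H = B \cap N$, so $N/H = W$ acts faithfully on $\Sigma$ by type-preserving automorphisms. The Bruhat decomposition inside $W$ (equivalently, the standard Coxeter-complex structure) shows that $W$ acts simply transitively on the chambers of $\Sigma$. Conversely, any type-preserving automorphism of the chamber complex $\Sigma$ is determined by the image of a fixed base chamber, because adjacent chambers across a given panel are uniquely swapped in a thin complex; this produces an injection $\Aut(\Sigma) \hookrightarrow W$. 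Combining the two directions yields $\Aut(\Sigma) \cong W$, as required.

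The main obstacle I expect is the last step — ruling out ``extra'' automorphisms beyond $W$. The spherical Coxeter diagrams $\mathbf{A}_n$, $\mathbf{D}_n$, $\mathbf{E}_6$ do admit nontrivial diagram symmetries, which would enlarge $\Aut(\Sigma)$ if one allowed non-type-preserving maps. The convention to adopt (consistent with viewing $\mB_{(G,B,N)}$ as a typed incidence geometry in the sense defined earlier in the chapter) is that automorphisms must respect the type function $t$; under this convention simple transitivity of $W$ on chambers forces the equality $\Aut(\Sigma) = W$, completing the proof.
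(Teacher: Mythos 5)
Your argument is correct, but note that the paper offers no proof of this proposition at all: it is stated as essentially a definition (in the spirit of Tits's 1957 remarks), with the identification of the $\Fun$-building with an apartment taken as the very meaning of ``defined over $\Fun$.'' What you have done is supply the missing justification, and you do so along the standard lines: axiom (ii) of the building definition, applied to a common chamber, shows all elements of $\mA$ are pairwise isomorphic, so the $\Fun$-object is well defined up to isomorphism; thinness gives $1+1$ chambers per panel; and Theorem \ref{Titssystem} together with saturation and effectiveness identifies $\Sigma$ with the Coxeter complex of $(W,S)$, on which $W = N/H$ acts faithfully (here one should observe that $E \leq B$, so $N \cap E = B \cap N = H$, which is the small point you assert without comment) and simply transitively on chambers. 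Your closing step --- that a type-preserving automorphism of a thin, chamber-connected complex is determined by the image of one chamber, so $\vert \Aut(\Sigma)\vert \leq \vert W \vert$ and hence $\Aut(\Sigma) \cong W$ --- is exactly the right way to rule out extra automorphisms, and your remark that one must work with type-preserving maps (excluding diagram symmetries in types $\mathbf{A}_n$, $\mathbf{D}_n$, $\mathbf{E}_6$) is a genuine precision that the paper leaves implicit but which is consistent with its earlier claim that $\Aut(\PG(n,\Fun)) \cong \mathbf{S}_{n+1}$. In short: the proposal is sound and more detailed than the source.
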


\bigskip
$\mathbf{A}_n$: \begin{tikzpicture}[style=thick, scale=1.2]
\foreach \x in {1,2,3,5,6}{
\fill (\x,0) circle (2pt);}

\draw (1,0) -- (2,0);
\draw (2,0) -- (3,0);
\draw (3,0) -- (3.5,0);
\draw (4.5,0) -- (5,0);
\draw (5,0) -- (6,0);
\draw (4,0) node {$\dots$} ;

\end{tikzpicture} \hspace{0.5cm} ($n\geq 1$)\\

$\mathbf{C}_n$: \begin{tikzpicture}[style=thick, scale=1.2]
\foreach \x in {1,2,3,5,6}{
\fill (\x,0) circle (2pt);}

\draw (1,0) -- (2,0);
\draw (2,0) -- (3,0);
\draw (3,0) -- (3.5,0);
\draw (4.5,0) -- (5,0);
\draw (5,0.035) -- (6,0.035);
\draw (5,-0.035) -- (6,-0.035);
\draw (4,0) node {$\dots$} ;

\end{tikzpicture}\hspace{0.5cm} ($n\geq 2$)\\

$\mathbf{D}_n$: \begin{tikzpicture}[style=thick, scale=1.2]
\foreach \x in {1,2,3,5,6}{
\fill (\x,0) circle (2pt);}

\fill (5,1) circle (2pt);

\draw (1,0) -- (2,0);
\draw (2,0) -- (3,0);
\draw (3,0) -- (3.5,0);
\draw (4.5,0) -- (5,0);
\draw (5,0) -- (6,0);
\draw (5,0) -- (5,1);
\draw (4,0) node {$\dots$} ;

\end{tikzpicture}\hspace{0.5cm}  ($n\geq 4$)\\

$\mathbf{E}_n$: \begin{tikzpicture}[style=thick, scale=1.2]
\foreach \x in {0,2,3,4}{
\fill (\x,0) circle (2pt);}

\fill (2,1) circle (2pt);

\draw (0,0) -- (.5,0);
\draw (1.5,0) -- (2,0);
\draw (2,0) -- (3,0);
\draw (3,0) -- (4,0);
\draw (2,0) -- (2,1);
\draw (1,0) node {$\dots$} ;

\end{tikzpicture}
\hspace{0.5cm}  ($n=6,7,8$)\\

$\mathbf{F}_4$: \begin{tikzpicture}[style=thick, scale=1.2]
\foreach \x in {0,1,2,3}{
\fill (\x,0) circle (2pt);}

\draw (0,0) -- (1,0);
\draw (2,0) -- (3,0);
\draw (1,0.035) -- (2,0.035);
\draw (1,-0.035) -- (2,-0.035);

\end{tikzpicture}\\

$\mathbf{H}_3$: \begin{tikzpicture}[style=thick, scale=1.2]
\foreach \x in {0,1,2}{
\fill (\x,0) circle (2pt);}

\draw (0,0) -- (1,0);
\draw (1,0) -- (2,0);
\draw (1.5,.25) node {$5$} ;

\end{tikzpicture}\\

$\mathbf{H}_4$: \begin{tikzpicture}[style=thick, scale=1.2]
\foreach \x in {-1,0,1,2}{
\fill (\x,0) circle (2pt);}
\draw (-1,0) -- (0,0);
\draw (0,0) -- (1,0);
\draw (1,0) -- (2,0);
\draw (1.5,.25) node {$5$} ;

\end{tikzpicture}\\

$\mathbf{I}_2(m)$: \begin{tikzpicture}[style=thick, scale=1.2]
\foreach \x in {1,2}{
\fill (\x,0) circle (2pt);}

\draw (1,0) -- (2,0);
\draw (1.5,.25) node {$m$} ;

\end{tikzpicture}\hspace{0.5cm}  ($m\geq 5$)\\

\subsubsection{Rank $2$ case}

Generalized $n$-gons over $\mathbb{F}_1$ are ordinary $n$-gons, and their automorphism groups are dihedral groups $\mathbf{D}_n$ \cite{anal}. 
It follows that the corner stones of the spherical buildings of rank at least $3$ over $\mathbb{F}_1$ are the ordinary $n$-gons with $n = 3, 4, 6, 8$ (since these gonalities are the only ones which do occur in the corresponding thick buildings by \cite{FeHi}). Still, it is important to note that in the rank $2$ examples, {\em all} positive integer values for $n$ occur (except $n = 0,1,2$).

\subsubsection{Quadrics}

We give one final explicit example | it concerns quadrics. \\

Let $n \in \mathbb{N}^\times$.
A {\em quadric}\index{quadric over $\F_1$} of projective dimension $2n$ or $2n + 1$ over $\mathbb{F}_1$ is a set $\mQ$ of $2(n + 1)$ points arranged in pairs $x_0,y_0,x_1,y_1,\ldots,x_n,y_n$, and its subspaces are 
the subsets not containing any couple $(x_i,y_i)$. The Witt index of the so defined quadrics is $n$. The quadrics in dimension $2n$ have the further property that the maximal singular subspaces ($n$-spaces consisting of $n + 1$ points) are partitioned in two types, namely those containing an even number of points of $\{a_0,a_1,\ldots,a_n\}$ and those containing an odd number. Automorphisms are permutations of the set $\mQ$ which preserve the given pairing in the $2n$-dimensional case.\\

\bigskip
\subsubsection{Trees as $\mathbb{F}_1$-geometries}

If we allow the value $\infty = n$ in the definition of generalized $n$-gon, we obtain a point-line geometry $\Gamma$ without closed paths, such that any two points or lines are contained in a path  without end points.
So $\Gamma$ becomes a tree (allowing more than $2$ points per line) without end points. Its apartments are paths without end points, and the Weyl group is an infinite dihedral group (generated by the reflections about two different adjacent vertices of such an apartment). So in this setting, a generalized $\infty$-gon over $\mathbb{F}_1$ is a tree of valency $2$  without end points. 

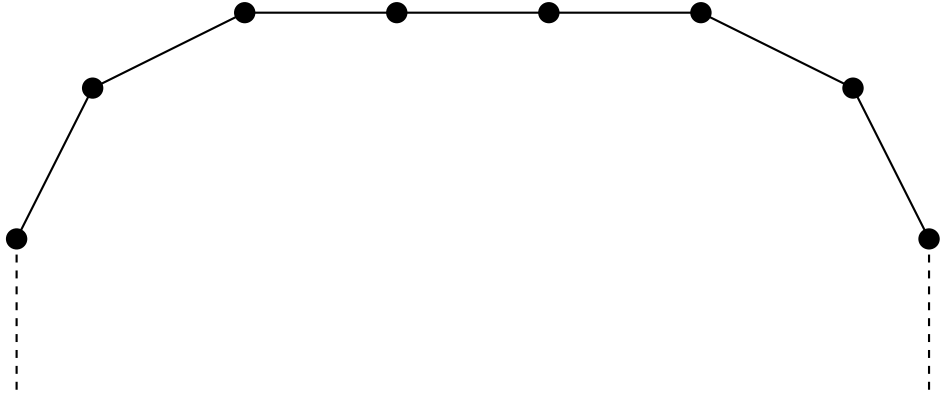
\begin{figure}
\begin{center}
\begin{tikzpicture}[style=thick, scale=2]
\foreach \x in {1,2,3,4}{
\fill (\x,0) circle (2pt);}
{\fill (0,-0.5) circle (2pt);}
{\fill (5,-0.5) circle (2pt);}
{\fill (-0.5,-1.5) circle (2pt);}
{\fill (5.5,-1.5) circle (2pt);}
\draw (1,0) -- (2,0);
\draw (2,0) -- (3,0);
\draw (3,0) -- (4,0);
\draw (4,0) -- (5,-0.5);
\draw (0,-0.5) -- (-0.5,-1.5);
\draw (1,0) -- (0,-0.5);
\draw (5,-0.5) -- (5.5,-1.5);
\draw[dashed] (-0.5,-2.5) -- (-0.5,-1.5);
\draw[dashed] (5.5,-2.5) -- (5.5,-1.5);

\end{tikzpicture} 
\caption{Generalized $\infty$-gon over $\mathbb{F}_1$.}
\end{center}
\end{figure}

Consider, for instance, $G = \mathbf{SL}_2(\mathbb{F}_q((t^{-1})))$. Then $G$ has a BN-pair $(B,N)$, where 

\begin{equation}
B = \{  \left(\begin{array}{cc} a & b\\ c & d   \end{array}\right) \in  \mathbf{SL}_2(\mathbb{F}_q[[t^{-1}]])  \vert c \equiv 0\mod{t^{-1}}\},
\end{equation}
and $N$ is the subgroup of $G$ consisting of elements with only $0$ on the diagonal or only $0$ on the antidiagonal.
Its Weyl group is an infinite dihedral group generated by 

\begin{equation}
s_1 =  \left(\begin{array}{cc} 0 & -1\\ 1 & 0   \end{array}\right)\ \ \mathrm{and}\ \ s_2 =  \left(\begin{array}{cc} 0 & -t\\ 1/t & 0   \end{array}\right).
\end{equation}

The corresponding building (defined in the same way as before) is a generalized $\infty$-gon with $q + 1$ points per line and $q + 1$ lines per point.
Its apartments are exactly the trees we introduced earlier in this section.\\

 \bigskip
\subsection{Generalizations}

One notes that it is possible to relax the BN-pair axioms and still get a meaningful theory, and a Weyl functor. 
For example, let us fix a certain category of groups $\mathbf{C}$, and let us
consider groups $G$ with subgroups $C, E$ such that

\begin{itemize}
\item[(GBN1)] 
$\langle C,E \rangle = G$;
\item[(GBN2)] 
$H = C \cap E
\lhd E$ and $E/H$ is isomorphic to an object of $\mathbf{C}$;
\item[(GBN3)] 
(to be filled in appropriately).
\end{itemize}

Now we fix a set of groups $\mC$ with the property that each of its elements is a proper subgroup of $G$ which properly contains $C$, and
construct a geometry $\Gamma = \Gamma(C,E;\mathbf{C},\mC)$ as follows.

\begin{itemize}
\item[(GB1)]
\textsc{Elements}: are elements of the left coset spaces $G/P$, $P \in \mC$.
\item[(GB2)]
\textsc{Incidence}: $gP$ is incident with $hP'$, $P \ne P'$, if these cosets intersect nontrivially.
\end{itemize}

With $\mathbf{C}$ = category of Coxeter groups and $\mC$ = all proper subgroups properly containing $C$, and (GBN3) replaced by 
the Bruhat decomposition axioms, we obtain a BN-pair $(C,E)$.

Defining (maximal) flags as before, the reader notes the following:

\begin{proposition}
$G$ acts by left translation as an automorphism group of $\Gamma$. This action is transitive on the maximal flags of the geometry.
\end{proposition}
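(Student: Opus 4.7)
The plan is, first, to verify by direct calculation that the assignment $g \cdot (hP) := (gh)P$ gives a well-defined action of $G$ on the disjoint union of coset spaces $\bigsqcup_{P \in \mC} G/P$. The orbit--stabilizer check is trivial, and preservation of the incidence relation (GB2) is equally routine: if $hP \cap h'P' \neq \emptyset$, then
\begin{equation}
(gh)P \cap (gh')P' \;=\; g \cdot \bigl(hP \cap h'P'\bigr) \;\neq\; \emptyset,
\end{equation}
because left multiplication by $g$ is a bijection on $G$. This shows $G$ acts on $\Gamma$ by automorphisms.

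Next I would single out the \emph{standard maximal flag} $F_0 := \{\,1 \cdot P \mid P \in \mC\,\}$. Every member of $F_0$ contains the identity of $G$ (indeed contains the whole subgroup $C$, since $C$ is a proper subgroup of each $P \in \mC$), so the cosets in $F_0$ intersect pairwise nontrivially and $F_0$ is a flag. Because $\mC$ furnishes exactly one ``type'' per element, $F_0$ has maximal cardinality and hence is a maximal flag.

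For the transitivity statement, let $F = \{\,g_P P \mid P \in \mC\,\}$ be an arbitrary maximal flag. The key claim is that there exists an element $g \in G$ lying in every coset $g_P P$ simultaneously. Granting this, one has $g_P P = gP$ for each $P \in \mC$ (since a coset is determined by any one of its elements), and therefore
\begin{equation}
F \;=\; \{\, g P \mid P \in \mC \,\} \;=\; g \cdot F_0,
\end{equation}
so the left translation by $g$ carries $F_0$ to $F$, establishing transitivity.

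The main obstacle is precisely this existence of a common element $g$ in all cosets of a maximal flag. The pairwise incidence condition (GB2) only gives nonempty binary intersections; passing from pairwise to total intersection is a genuine strengthening, and in the classical BN-pair setting it is exactly what the Bruhat axiom (BN3), together with the parabolic structure $P_I = BW_I B$, arranges. In the present generalized framework, where (GBN3) is ``to be filled in appropriately'', the proposition should be read as asserting that any admissible version of (GBN3) must be strong enough to guarantee this common-element property; modulo that, the proof reduces to the bookkeeping above.
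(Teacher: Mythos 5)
Your first paragraph is complete and correct: left translation permutes the cosets of each $P \in \mC$, preserves types, and preserves nonempty intersection, so $G$ acts on $\Gamma$ by automorphisms. The paper itself offers no proof (the proposition is left as something ``the reader notes''), so there is nothing to compare against on that score. Your reduction of the transitivity claim to the statement that every maximal flag admits a \emph{common} element of all its cosets is also exactly the right reduction, and your closing remark --- that this is where all the content lies and that it must be supplied by whatever (GBN3) turns out to be --- is the correct diagnosis rather than a cosmetic caveat.

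Be aware, though, that the gap you flag is genuine and not just a step you failed to see how to finish: at the stated level of generality the common-element property, and with it the second sentence of the proposition, is simply false. Take $G = \mathbb{Z}/2 \times \mathbb{Z}/2 = \{e,a,b,ab\}$, $C = \{e\}$, $E = G$, and let $\mC = \{P_1,P_2,P_3\}$ consist of the three subgroups of order $2$, say $P_1 = \{e,a\}$, $P_2 = \{e,b\}$, $P_3 = \{e,ab\}$. Each $P_i$ is a proper subgroup of $G$ properly containing $C$, and $F = \{P_1,\, P_2,\, aP_3\}$ is a flag of full rank, since $P_1 \cap P_2 = \{e\}$, $P_1 \cap aP_3 = \{a\}$ and $P_2 \cap aP_3 = \{b\}$; yet $P_1 \cap P_2 \cap aP_3 = \emptyset$, so $F$ is not of the form $gF_0$ and does not lie in the $G$-orbit of $F_0$. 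Hence transitivity on maximal flags cannot follow from (GBN1)--(GBN2) and the construction (GB1)--(GB2) alone: either (GBN3) must be strong enough to force pairwise-intersecting cosets from $\mC$ to share a common element (as the Bruhat axioms do in the classical case, where pairwise intersection of parabolic cosets forces a common coset of $B$), or ``maximal flag'' must be read as ``translate of the standard flag $F_0$'', in which case transitivity is a tautology. Your write-up would be strengthened by stating explicitly that the missing step is not merely unverified but fails without further hypotheses, and by exhibiting such a counterexample.
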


The Weyl functor for this category of group data would be 

\begin{equation}
\underline{\mA}: G \longrightarrow E/H,
\end{equation}
and on the geometrical side, it should send the geometry to the geometry induced by the left coset spaces $E/H$ ($= \{eH \vert e \in E\}$).\\

\quad\textsc{Question}\quad
{\em Find good candidates for $\mathbf{C}$ and $\mC$, and
formulations for {\rm (GBN3)} such that the Weyl images have precisely $2$ points per line.}\\

It is precisely this kind of question which we will be considering in the next section from the synthetic side. (The game that we play there is first
to imagine what the Weyl images should be | certain thin geometries which are fixed objects of the functor we want to define | and build, using 
certain prescribed axioms, the general geometries ``defined over $\Fun$'' using the Weyl geometries as bricks. One could play the same game here:
imagine what the geometries induced on $E/H$ should be, and ask a similar question.)\\

\section{Synthetic geometry over $\mathbb{F}_1$}

In this section, we consider good axioms for incidence geometries to be naturally {\em defined over} $\mathbb{F}_1$.  
This has already been done in various ways for schemes, and in the next chapters we will be concerned by this matter. Still, apart from
 some remarks made by Tits in his 1957 paper, not much seems to be known prior to our paper \cite{NotesI} (on which the present section is based). We want to distinguish between geometries defined over $\mathbb{F}_1$ (or 
 {\em $\mathbb{F}_1$-geometries}\index{$\Fun$-geometry}) and their {\em $\mathbb{F}_1$-versions}. Let $\mC$ be a class of incidence geometries (say, of Buekenhout-Tits geometries with some 
 prescribed set of axioms, cf. below). If $\mC$ (that is, all its elements) will be defined over $\mathbb{F}_1$, we want to have a (Weyl) functor at our disposal which maps any element
 of $\mC$ to its ``$\mathbb{F}_1$-version''; this will be a possibly degenerate incidence geometry which also satisfies the aforementioned axioms, and it will be independent of the chosen element in $\mC$. 
 A model example is 
 the class of generalized $m$-gons with $m \in \mathbb{N} \setminus \{0,1\}$ (that is, the rank $2$ spherical buildings); they will all be defined over $\mathbb{F}_1$ (whether or not they are themselves defined over a ``real'' field), and the images under the functor we seek to define are  ordinary $m$-gons. 
 
 The situation we want to describe can be best (and even almost precisely) compared to the principle of base extension/descent in scheme theory.
 In fact, in the second chapter of the author in this volume we will show that once this theory has been established, there will be an analogy between $\mathbb{F}_1$-incidence geometry and $\mathbb{F}_1$-scheme theory which goes much further than one would suspect at first.  (The interplay between both theories enables one to study, for instance,  large classes of groups (including Chevalley groups) as automorphism groups of schemes over $\mathbb{F}_1$.) 
 
 The details can be found in \cite{NotesI}.  \\

 \subsection{Incidence geometries related to diagrams}

In this chapter we will consider incidence geometries {\em related to diagrams}.  
An axiom system is then imposed by providing a {\em Buekenhout-Tits
diagram}\index{Buekenhout-Tits!diagram} as explained in the next paragraph.\\

\subsection{Buekenhout-Tits diagrams}

Let $\mathscr{D}$ be a labeled graph on $I$, where for $i,j \in I$
the label $\mathscr{D}_{ij}$ is a class of rank $2$ geometries.
We say that $\mathscr{D}$ is a {\em Buekenhout-Tits diagram}\index{Buekenhout-Tits!diagram} for
the geometry $\Gamma = (X,\I,I,t)$
when for every flag $F$ of $\Gamma$ of corank 2,
say $t(F) = I \setminus \{ i,j\}$, the residue $\Gamma_F$
belongs to the class of geometries $\mathscr{D}_{ij}$.

This is a recursive definition for the concept of diagram
in terms of what the labeled edges mean for rank 2 geometries.\\

\subsection{Some traditional labels}

We introduce the nomenclature for some frequently used labels.

\bigskip
\begin{tikzpicture}[style=thick, scale=1.3]
\foreach \x in {1,2}{
\fill (\x,0) circle (2pt);}

\end{tikzpicture} \hspace{0.5cm} $\mathbf{Di}$\index{$\mathbf{Di}$-label}:
 Every $i$-object is incident with every $j$-object.

\bigskip
\begin{tikzpicture}[style=thick, scale=1.3]
\foreach \x in {1,2}{
\fill (\x,0) circle (2pt);}

\draw (1,0) -- (2,0);

\end{tikzpicture}\hspace{0.5cm} $\mathbf{A}_2$\index{$\mathbf{A}_2$-label}: The $i$-objects and $j$-objects form the
points and lines of an axiomatic projective plane.

\begin{tikzpicture}[style=thick, scale=1.3]
\foreach \x in {1,2}{
\fill (\x,0) circle (2pt);}

\draw (1,0.035) -- (2,0.035);
\draw (1,-0.035) -- (2,-0.035);

\end{tikzpicture}\hspace{0.5cm} $\mathbf{B}_2$\index{$\mathbf{B}_2$-label}: The $i$-objects and $j$-objects form the
points and lines of a generalized quadrangle.

\begin{tikzpicture}[style=thick, scale=1.3]
\foreach \x in {1,2}{
\fill (\x,0) circle (2pt);}

\draw (1,0) -- (2,0);
\draw (1.5,.25) node {$m$} ;

\end{tikzpicture}\hspace{0.5cm} $\mathbf{I}_2(m)$ ($m\in \{6,8\}$)\index{$\mathbf{I}_2(m)$-label}: The $i$-objects and $j$-objects form the
points and lines of a generalized hexagon/octagon.

\begin{tikzpicture}[style=thick, scale=1.3]
\foreach \x in {1,2}{
\fill (\x,0) circle (2pt);}

\draw (1,0) -- (2,0);
\draw (1.5,.25) node {$\mathbf{Af}$} ;

\end{tikzpicture}\hspace{0.5cm} $\mathbf{Af}$\index{$\mathbf{Af}$-label}:
The $i$-objects and $j$-objects form the
points and lines of an axiomatic affine plane.

 \begin{tikzpicture}[style=thick, scale=1.3]
\foreach \x in {1,2}{
\fill (\x,0) circle (2pt);}

\draw (1,0) -- (2,0);
\draw (1.5,.25) node {$\mathbf{C}$} ;

\end{tikzpicture}\hspace{0.5cm} $\mathbf{C}$\index{$\mathbf{C}$-label}: The $i$-objects and $j$-objects form the
points and edges of a complete graph.\\

Many other such diagrams are used, but those will be of no concern for our purposes.\\

\subsection{An example}

The geometry of points, lines and planes in a $3$-dimen\-sio\-nal combinatorial
projective space satisfies the axioms given by the diagram

\begin{tikzpicture}[style=thick, scale=1.3]
\foreach \x in {1,2,3}{
\fill (\x,0) circle (2pt);}

\draw (1,0) -- (2,0);
\draw (2,0) -- (3,0);

\end{tikzpicture}

\bigskip
By the result of Veblen-Young, combinatorial $3$-dimensional projective spaces 
necessarily  come from (left or right) vector spaces over a skew field. It is an easy exercise to prove the Veblen-Young
axiom from the $\mathbf{A}_n$-diagram, so that the following holds.

\begin{theorem}
A (thick) Buekenhout-Tits geometry satisfying
the $\mathbf{A}_n$ diagram axioms is a projective space.
\end{theorem}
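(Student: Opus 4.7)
The plan is to reduce the statement to the Veblen--Young theorem. For this we verify, on the point-line truncation of $\Gamma$, that the axioms of an axiomatic projective space hold, and then invoke Veblen--Young to conclude that $\Gamma$ is isomorphic to some $\PG(n,\mathbf{R})$ with $\mathbf{R}$ a division ring. The cases $n=1,2$ require no argument: a rank-$1$ $\mathbf{A}_n$-geometry is by convention a projective line, and a rank-$2$ $\mathbf{A}_2$-geometry is, by the very definition of the $\mathbf{A}_2$-label, an axiomatic projective plane. Assume therefore $n\geq 3$, proceeding by induction on $n$.

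Call type-$1$ elements \emph{points}, type-$2$ elements \emph{lines}, and say that a point $p$ is \emph{on} a line $L$ when $p\,\I\,L$. Two structural features of the $\mathbf{A}_n$-diagram drive the argument: every corank-$2$ flag missing a pair of \emph{adjacent} types $\{i,i+1\}$ has residue an axiomatic projective plane, while every corank-$2$ flag missing a pair of \emph{non-adjacent} types has a digon residue (the implicit $\mathbf{Di}$-label). The digon residues let one define a consistent notion of ``containment'' between objects of different types which is well-behaved under taking residues, and combined with residual connectedness this yields, for any two distinct points $p,q$, the existence of a common line. Uniqueness of that line, and the Veblen--Young axiom, are both obtained by descending to a rank-$2$ projective plane residue: given two lines $L,L'$ through the same two points $p,q$, residual connectedness together with the digon labels produces a type-$3$ object $\pi$ incident with all of $p,q,L,L'$; extending to a flag of corank~$2$ of types $\{3,4,\ldots,n\}$ yields a residue which is an axiomatic projective plane, in which $L=L'$ because two points determine a unique line. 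An analogous argument, starting from a non-collinear triple $p,q,r$ and a line meeting two sides of the induced triangle, produces a common plane residue in which Veblen--Young holds, so it holds in $\Gamma$ as well. Thickness supplies at least three points per line and enough points off any line to force dimension at least~$3$, after which the Veblen--Young theorem yields the conclusion.

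The main obstacle is the bookkeeping needed to show that the abstract notion ``objects incident with a common higher-type object'' faithfully models classical subspace containment. Concretely, one must verify that, given points $p,q$ and a type-$k$ element $X$ incident with both, the type-$2$ line through $p,q$ produced in the construction is itself incident with $X$, and dually that the set of points incident with a given type-$k$ element forms a $(k-1)$-dimensional projective subspace in the eventual $\PG(n,\mathbf{R})$. This is where the digon labels on non-adjacent node pairs, together with residual connectedness and the inductive hypothesis applied to the residues $\Gamma_{\{p\}}$ (which carry an $\mathbf{A}_{n-1}$-diagram), do the essential work and can be organized into a clean induction on $n$.
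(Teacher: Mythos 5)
Your proposal takes essentially the same route the paper intends: the paper offers no detailed argument, stating only that ``it is an easy exercise to prove the Veblen-Young axiom from the $\mathbf{A}_n$-diagram'' and then invoking the Veblen--Young theorem, which is precisely your reduction. Your outline simply fills in more of the residue/residual-connectedness bookkeeping that the paper leaves to the reader.
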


So the axioms which are imposed on the Buekenhout-Tits geometry by the $\mathbf{A}_n$-diagram suffice to fully determine the incidence geometry.

\subsection{The general Weyl functor}

For the case of buildings, we have seen that the natural way to associate to a building its $\mathbb{F}_1$-building/version,
is through the functor\index{$\underline{\mathscr{A}}$}

\begin{equation}
\underline{\mathscr{A}}: \mathbb{B} \rightarrow \mathbb{A},  
\end{equation}
from the category of (spherical) buildings to the category of apartments of such buildings\index{$\mathbb{A}$}\index{$\mathbb{B}$}. Let us use the same notation for the more 
general hypothetical functor which associates to a geometry (satisfying strong enough axioms), its ``$\mathbb{F}_1$-component'', and let us also keep the notation $\mathbb{A}$ for the more general version of Weyl geometries we are seeking. We want to see the objects in $\mathbb{A}$ also as objects 
of $\mathbb{B}$.\\

The $\mathbb{F}_1$-functor $\underline{\mathscr{A}}$ should have several properties (with respect to the images):

\begin{itemize}
\item[A1|]
all lines should have at most  $2$ different points;
\item[A2|]
an image should be a ``universal object'', in the sense that it should be a subgeometry of any thick geometry of the same ``type'' (defined over {\em any} field, if at all defined over one) of at least the same rank (as we will later see, it will correspond to scheme theoretic base descent to $\mathbb{F}_1$); 
\item[A3|]
it should, of course, still carry the same axiomatic structure (so that $o \in \mathbb{A}$ and elements of $\underline{\mA}^{-1}(o)$ carry the same Buekenhout-Tits diagram);
\item[N|]
it should give a geometric meaning to (certain) arithmetic formulas which express (certain) combinatorial properties of the finite thick geometries we want to define, assumed
to have $s + 1$ points incident with every line, evaluated at the value $s = 1$;
\item[F|]
as $\mathbb{A}$ will be a subclass of the class of $\mathbb{F}_1$-geometries, it should consist precisely of the fixed elements of $\underline{\mathscr{A}}$.
 \end{itemize}
 
 \begin{proposition}[Conjecturally, \cite{NotesI}]
 \label{propoconj}
 Consider $\underline{\mA}: \mathbb{B} \longrightarrow \mathbb{A}$. Then $\mathbb{A}$ is given by the solutions of
 \begin{equation}
 \underline{\mA}(X) = (X).
 \end{equation}
 (The functor ``retracts'' $\mathbb{B}$ to $\mathbb{A}$.)
 \end{proposition}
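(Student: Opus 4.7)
The plan is to establish the set equality $\mathbb{A} = \{X \in \mathbb{B}\, :\, \underline{\mA}(X) \cong X\}$ by proving both inclusions, working from the informal design principles A1--A3, F listed just before the statement (these are effectively the defining specifications of the functor, so the proof will have the flavor of unpacking definitions, together with a uniqueness argument in each fiber).

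The forward direction, $\mathbb{A}\subseteq \mathrm{Fix}(\underline{\mA})$, I would handle first. Let $X \in \mathbb{A}$. By A1 its lines carry at most two points, by A3 it obeys exactly the same Buekenhout--Tits diagram as the thick geometries in its $\underline{\mA}$-fiber, and by A2 it embeds as a (universal) subgeometry in every such thick geometry. The key point is that these three conditions are meant to characterize $\underline{\mA}(Y)$ uniquely (up to isomorphism) for every $Y$ in the fiber over $X$: indeed, this is exactly what makes $\underline{\mA}$ a well-defined functor on $\mathbb{B}$. Applying this characterization to $Y=X$ itself (since $X \in \mathbb{A} \subseteq \mathbb{B}$), the Weyl image $\underline{\mA}(X)$ is a thin universal subgeometry of $X$ satisfying the same diagram as $X$; the only such subgeometry is $X$ itself, giving $\underline{\mA}(X) \cong X$.

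The reverse direction, $\mathrm{Fix}(\underline{\mA})\subseteq\mathbb{A}$, is even more direct: if $\underline{\mA}(X)\cong X$, then $X$ inherits A1--A3 verbatim from its image, so by property F (the defining property of $\mathbb{A}$ as the class of Weyl-type geometries), $X \in \mathbb{A}$. Combining the two inclusions yields the proposition, and justifies the word ``retracts'': $\underline{\mA}^2=\underline{\mA}$ then follows immediately, since $\underline{\mA}(X)\in\mathbb{A}$ is fixed by $\underline{\mA}$, so $\mathbb{A}$ is the essential image as well as the fixed locus.

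The main obstacle, and the reason the statement is flagged ``Conjecturally'', is not either of the set-theoretic inclusions above but the prior step of actually constructing $\underline{\mA}$ in the full generality of Buekenhout--Tits geometries satisfying suitable diagram axioms. In the spherical building case this is the passage to an apartment, which is well-defined precisely because of the strong transitivity built into the BN-pair axioms; for a general diagram geometry one must isolate axioms strong enough to guarantee existence of a unique (up to isomorphism) thin universal subgeometry with the same diagram -- essentially an $\mathbb{F}_1$-analogue of the Veblen--Young/Tits classification -- before the formal retraction argument above can be invoked. Once such an axiomatic framework is fixed (as the author proposes to do in \cite{NotesI}), the retraction statement itself reduces to the tautological verification sketched in the previous two paragraphs.
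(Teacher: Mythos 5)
The paper offers no proof of this statement at all: it is explicitly labelled ``Conjecturally'' and deferred to \cite{NotesI}, and the surrounding text makes clear that the actual mathematical content --- classifying $\mathbb{A}$ from the axioms A1--A4 and C, and only then defining $\underline{\mA}$ and its fibers --- is carried out elsewhere. So there is no argument in the paper to compare yours against, and to your credit your final paragraph correctly diagnoses why: the proposition cannot be proved until the functor $\underline{\mA}$ has actually been constructed on all of $\mathbb{B}$, which is the genuinely hard (and here unresolved) step.

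That said, the two inclusions you do write out have a problem beyond incompleteness. Your reverse inclusion, $\mathrm{Fix}(\underline{\mA})\subseteq\mathbb{A}$, is obtained by ``property F,'' but F as stated in the paper \emph{is} the proposition: it says verbatim that $\mathbb{A}$ ``should consist precisely of the fixed elements of $\underline{\mathscr{A}}$.'' Invoking it begs the question --- if F is taken as an axiom the whole proposition is true by fiat and there is nothing to prove; if F is a desideratum to be verified, you cannot use it. The forward inclusion has a subtler version of the same issue: the claim that A1--A3 ``characterize $\underline{\mA}(Y)$ uniquely up to isomorphism,'' and in particular that the only thin universal subgeometry of $X$ with the same diagram is $X$ itself, is exactly the uniqueness/classification statement that \cite{NotesI} is supposed to supply. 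It is not a formal consequence of the listed desiderata (note, e.g., that a thin geometry can contain proper thin subgeometries satisfying the same diagram --- an ordinary $n$-gon contains no proper sub-$n$-gon, but this must be \emph{checked} type by type, not assumed). A proof that did not presuppose F would instead have to verify, for each element of the classified list $\mathbb{A}$ (the ordinary $m$-gons, the $\mathbf{Af}$- and $\mathbf{U}$-types, etc.), that its apartment/Weyl image is itself --- which is the route the paper implicitly gestures at and which remains open in the generality claimed.
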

 
 \begin{remark}
 {\rm Contrary to the base extension theory we will later speculate on, not every incidence geometry is suited to be defined over $\mathbb{F}_1$. (In general, 
 without imposing extra structure on such a geometry, examples are too wild.)
 }
 \end{remark}
 
 Some other remarks need to be made.
 
 \begin{itemize}
\item[A1$'$|]
We work up to point-line duality: that is why we are allowed to ask, without loss of generality, that lines have at most two points. We do {\em not} ask that they
have {\em precisely}  two points, one motivation being e.g. (combinatorial) affine spaces over $\mathbb{F}_1$, in which any line has precisely one point. (And their scheme theoretic versions have precisely one {\em closed point}\index{closed!point}, cf. later chapters for a formal definition.)
\item[CL|]
Referring to the preceding remark, we already note that later on, $\mathbb{F}_1$-geometries with precisely $2$ points per line will correspond to {\em closed subschemes}\index{closed!subscheme} (cf. later chapters) of the appropriate ambient projective $\mathbb{F}_1$-space (as a scheme). (If they contain lines with only one point, one will need to invoke open sets to define the natural associated $\mathbb{F}_1$-scheme.)
\item[A4|]
In some sense, the number of lines through a point of an element $\Gamma$ of $\mathbb{A}$ should reflect the {\em rank} of the geometries in $\underline{\mathscr{A}}^{-1}(\Gamma)$. Think for example of the combinatorial affine and projective spaces over $\mathbb{F}_1$, and the ``Weyl geometries'' of buildings as described by Tits.
Note that this is not a feature of incidence geometries in general, but it appears to be a property which is encoded in the $\underline{\mathscr{A}}$-image of an incidence geometry.
 \end{itemize}

Our natural starting point in \cite{NotesI} was the category of Buekenhout-Tits geometries. The reader is noted that all buildings are members (and so all Chevalley group schemes are automorphism group schemes of members). 
We only consider connected geometries | the general theory can be reduced to the connected theory as usual. We call this assumption ``C''.

The first step is to classify the elements of $\mathbb{A}$. We take A1-A2-A3-A4-C to be the main axioms. After having determined $\mathbb{A}$ \cite{NotesI}, one
defines the functor $\underline{\mathscr{A}}$, and the inverse image $\underline{\mathscr{A}}^{-1}(\mathbb{A})$ in $\mathbb{BT}$\index{$\mathbb{BT}$}, the category of Buekenhout-Tits geometries with obvious morphisms. This inverse image is denoted by $\mathbb{BT}_{\vert 1}$\index{$\mathbb{BT}_{\vert 1}$}.

We refer the reader to \cite{NotesI} for more details. Let us just mention the rank one and two examples in $\mathbb{A}$.

\subsection{Rank $1$ | $\mathbb{A}$}

\bigskip
\begin{tikzpicture}[style=thick, scale=1.3]
\foreach \x in {1,2}{
\fill (\x,0) circle (2pt);}

\end{tikzpicture} \hspace{0.5cm}  $\mathbf{Di}_1$:
 Every $i$-object is incident with every $j$-object. Over $\mathbb{F}_1$, this example has one line and one point, and they are incident.

\bigskip
\begin{tikzpicture}[style=thick, scale=1.3]
\foreach \x in {1,2}{
\fill (\x,0) circle (2pt);}

\draw (1,0) -- (2,0);

\end{tikzpicture}\hspace{0.5cm} $\mathbf{A}_1$: The $i$-objects and $j$-objects form the
points and lines of a combinatorial projective line over $\mathbb{F}_1$: two distinct points incident with a line.

\begin{tikzpicture}[style=thick, scale=1.3]
\foreach \x in {1,2}{
\fill (\x,0) circle (2pt);}

\draw (1,0) -- (2,0);
\draw (1.5,.25) node {$\mathbf{Af}$} ;

\end{tikzpicture}\hspace{0.5cm} $\mathbf{Af}$:
The $i$-objects and $j$-objects form the
points and lines of a combinatorial affine line over $\mathbb{F}_1$: one point incident with one line (the ``absolute flag''\index{absolute!flag}).

\bigskip
\subsection{Rank $2$ | $\mathbb{A}$}

The rank $2$ examples of Buekenhout-Tits geometries are the most important ones, since all other examples (ignoring the rank $0$ and $1$ cases) are 
constructed from these from axioms governed by the diagrams. By A4, any point is incident with at most two lines. Taking this property into account, the reader easily sees that the geometries must be of one of the following types
(where at the end, we introduce a new type).\\

\bigskip
\begin{tikzpicture}[style=thick, scale=1.3]
\foreach \x in {1,2}{
\fill (\x,0) circle (2pt);}

\end{tikzpicture} \hspace{0.5cm}  $\mathbf{Di}_2$:
 Every $i$-object is incident with every $j$-object. Over $\mathbb{F}_1$, this example has two lines and two points, and any point is incident with any line.

\bigskip
\begin{tikzpicture}[style=thick, scale=1.3]
\foreach \x in {1,2}{
\fill (\x,0) circle (2pt);}

\draw (1,0) -- (2,0);

\end{tikzpicture}\hspace{0.5cm} $\mathbf{A}_2$: The $i$-objects and $j$-objects form the
points and lines of a combinatorial projective plane over $\mathbb{F}_1$: an ordinary triangle.

\begin{tikzpicture}[style=thick, scale=1.3]
\foreach \x in {1,2}{
\fill (\x,0) circle (2pt);}

\draw (1,0.035) -- (2,0.035);
\draw (1,-0.035) -- (2,-0.035);

\end{tikzpicture}\hspace{0.5cm} $\mathbf{B}_2$: The $i$-objects and $j$-objects form the
points and lines of a generalized quadrangle, which is an ordinary $4$-gon.

\begin{tikzpicture}[style=thick, scale=1.3]
\foreach \x in {1,2}{
\fill (\x,0) circle (2pt);}

\draw (1,0) -- (2,0);
\draw (1.5,.25) node {$m$} ;

\end{tikzpicture}\hspace{0.5cm} $\mathbf{I}_2(m)$ ($m\in \mathbb{N} \cup \{\infty\}$, $m \geq 5$): The $i$-objects and $j$-objects form the
points and lines of an ordinary $m$-gon.

\begin{tikzpicture}[style=thick, scale=1.3]
\foreach \x in {1,2}{
\fill (\x,0) circle (2pt);}

\draw (1,0) -- (2,0);
\draw (1.5,.25) node {$\mathbf{Af}$} ;

\end{tikzpicture}\hspace{0.5cm} $\mathbf{Af}$:
The $i$-objects and $j$-objects form the
points and lines of a combinatorial affine plane over $\mathbb{F}_1$: one point incident with two lines which are incident with only that point.




There is also an odd-one-out class of examples which enters the picture.

\begin{tikzpicture}[style=thick, scale=1.3]
\foreach \x in {1,2}{
\fill (\x,0) circle (2pt);}

\draw (1,0) -- (2,0);
\draw (1.5,.25) node {$\mathbf{U}$} ;

\end{tikzpicture}\hspace{0.5cm} $\mathbf{U}$\index{$\mathbf{U}$-label}:
The $i$-objects and $j$-objects form the
points and lines of a connected tree of valency $\leq 2$, with at least one end point. (Lines with one point are allowed, so at the ends, one can have end points or end lines.)

The unique examples of $\mathbf{Di}_2$, $\mathbf{A}_2$, $\mathbf{B}_2$ and $\mathbf{I}_2(m)$ are self-dual. The class described by $\mathbf{U}$ is also
self-dual, while 
the point-line dual of the $\mathbf{Af}$-type geometry is one of the rank $1$ examples. \\

\subsection{Cardinality}

By ``ordinary $\infty$-gons'' we mean connected trees with valency $2$ without end points. {\em The number of points is countable by the connectedness condition.}
The same is true for elements of type $\mathbf{U}$.\\

\medskip
\subsection{$\mathbb{F}_1$-Incidence geometries and base extension}

An incidence geometry which is {\em defined over} $\mathbb{F}_1$\index{incidence!geometry!over $\Fun$} could also be regarded as a couple $(S,\underline{S})$, where $S \in \mathbb{BT}_{\vert 1}$, $\underline{S} \in \mathbb{A}$, and 
$\underline{S} \cong \underline{\mA}(S)$. It is important to keep the category $\mathbb{S}$ in mind with objects the elements of 
$\underline{\mA}^{-1}(\underline{S})$ and natural morphisms. 

Many of the known fundamental finite incidence geometries (think in the first place of generalized polygons) come in ``classes''; for instance,
the $\mQ(4,k)$\index{$\mQ(4,k)$-functor} quadrangles can be seen as a functor which associates with each (possibly infinite) field $k$ the classical Moufang quadrangle $\mQ(4,k)$ \cite{POL}
 (in fact, it is defined as a $4$-dimensional hypersurface, so can also be regarded as a $\mathbb{Z}$-scheme).  It is convenient to consider the subcategory $\mathbb{FBT}_{\vert 1}$\index{$\mathbb{FBT}_{\vert 1}$} of $\mathbb{BT}_{\vert 1}$ which consists of those elements of $\mathbb{BT}_{\vert 1}$ which are members of infinite classes which arise as a functor from the category of fields to $\mathbb{BT}_{\vert 1}$. So to each $\Gamma \in \mathbb{FBT}_{\vert 1}$ we can associate at least one such 
 functor $F_{\Gamma}$ ($F_{\Gamma}$ need not be unique, as often over small finite fields such classes intersect in classical examples). 
 In $\mathbb{FBT}_{\vert 1}$ we can define a more refined version of $\mathbb{F}_1$-incidence geometry: it is a triple of the form
 \begin{equation}
 (F_{\Gamma},\Gamma,\underline{\Gamma}),
 \end{equation} 
 where $(\Gamma,\underline{\Gamma})$ is as above. In this context we call $F_{\Gamma}(k)$ a {\em $k$-extension}\index{$k$-extension} of $\Gamma$.

\section{Basic absolute  Linear Algebra}

In this section we describe several aspects of absolute Linear Algebra, partially and loosely
following the Kapranov-Smirnov document \cite{KapranovUN}.  We will usually only consider finite or infinitely countable dimensional vector spaces;
in the second chapter of the author, detailed considerations will be made on dimensions of any cardinality.

\subsection{Structural setting and mantra}

As we want to see $\Fun$ as a field which is different from $\F_2$, one often depicts $\Fun$ as the set $\{0,1\}$ for which we only have the following 
operations:
\begin{equation}
0\cdot 1 = 0 = 0\cdot 0\ \ \mbox{and}\ \ 1\cdot 1 = 1. 
\end{equation}

So in absolute Linear Algebra we are not allowed to have addition of vectors and we have to define everything in terms of scalar multiplication. \\

\subsection{Field extensions of $\Fun$}

Formally, for each $m \in \mathbb{N}^\times$ we define the {\em field extension}\index{field extension} $\Fun^m$\index{$\Fun^m$} of $\Fun$ of degree $m$ as the set
$\{0\} \cup \mu_m$, where $\mu_m$ is the (multiplicatively written) cyclic group of order $m$, and $0$ is an absorbing element for the extended
multiplication to $\{0\} \cup \mu_m$.\\

\subsection{Vector spaces over $\mathbb{F}_1^{(n)}$}

At the level of $\mathbb{F}_1$ we cannot make a distinction between 
affine spaces and vector spaces (as a torsor, nothing happens), so
in the vein of the previous section, a {\em vector/affine space}\index{vector space over $\F_1^n$}{affine space over $\F_1^n$} over $\mathbb{F}_1^n$, $n \in \mathbb{N}^\times$, is a triple $V = (\mathbf{0},X,\mu_n)$, where $\mathbf{0}$
is a distinguished point and $X$ a set, and
where $\mu_n$ acts freely on $X$. Each $\mu_n$-orbit corresponds to a direction. If $n = 1$, we get the notion considered in the previous section.
If the dimension is countably infinite, $\mu_n$ may be replaced by $\mathbb{Z}, +$ (the infinite cyclic group). Another definition is needed when the dimension is larger | we will come back to this issue in due course.\\

\subsection{Basis}

A {\em basis}\index{basis} of the $d$-dimensional $\mathbb{F}_1^n$-vector space $V = (\mathbf{0},X,\mu_n)$ is a set of $d$ elements in $X$ which are 
two by two contained in different $\mu_n$-orbits (so it is a set of representatives of the $\mu_n$-action); here, formally, $X$ consists of $dn$ elements, and $\mu_n$ is the cyclic group with $n$ elements. (If $d$ is not finite one selects exactly one element in each $\mu_n$-orbit.)
If $n = 1$, we only have $d$ elements in $X$
(which expresses the fact that the $\mathbb{F}_1$-linear group indeed is the symmetric group) - as such we obtain the {\em absolute basis}\index{absolute!basis}.

Once a choice of a basis $\{b_i \vert i \in I\}$ has been made, any element $v$ of $V$ can be uniquely written as $b_j^{\alpha^u}$, for 
unique $j \in I$ and $\alpha^u \in \mu_n = \langle \alpha \rangle$. So we can also represent $v$ by a $d$-tuple with exactly one nonzero entry, namely $b_j^{\alpha^u}$ (in 
the $j$-th column).

\subsection{Dimension}

In the notation of above, the {\em dimension}\index{dimension of vector space} of $V$ is given by  $\mathrm{card}(V)/n = d$ (the number of $\mu_n$-orbits). 

\subsection{Field extension}

Let $V$ be a (not necessarily finite dimensional) $d$-space over $\mathbb{F}_1^n$, $n$ finite, so that $\vert X = X_V \vert = dn$. For any positive integral divisor $m$ of $n$, with $n = mr$,
$V$ can also be seen as a $dr$-space over $\mathbb{F}_1^{m}$. Note that there is a unique cyclic subgroup $\mu_m$ of $\mu_n$ of size $m$, so 
there is only one way to do it (since we have  to preserve the structure of $V$ in the process).\\

In terms of affine spaces, interpretation over a subfield can be depicted as follows.\\

\begin{equation}
\begin{array}{ccc}
\AG(d,\mathbb{F}_1^n) &\longrightarrow &\AG(dr,\mathbb{F}_1^{m})\\
&&\\
\downarrow& &\downarrow\\
&&\\
(X,\mu_n)& \longrightarrow& (X,\mu_m)
\end{array}
\end{equation}

\subsection{Projective completion}

By definition, the {\em projective completion}\index{projective!completion} of a combinatorial affine space $\AG(n,\mathbb{K})$, $n \in \mathbb{N}$ and $\mathbb{K}$ a field, is the projective space $\PG(n,\mathbb{K})$ of the same dimension and defined over the same field, which one obtains by adding a ``hyperplane at infinity". The latter is a projective spave of one dimension less of which the subspaces represent parallel classes of subspaces of $\AG(n,\mathbb{K})$. For example, if $n = 2$ and $\mathbb{K} = \mathbb{R}$, we add a line at infinity which consists of parallel classes of affine lines.

Following the aforementioned considerations on projective completion, we immediately have the details on field extension for projective $\mathbb{F}_1^n$-spaces: starting from a projective $\mathbb{F}_1^n$-space $\mathbf{P} = \PG(d,\mathbb{F}_1^n)$, we choose an arbitrary point $x$, construct
the affine space $\mathbf{P} \setminus \{x\}$, blow up as above, and then projectively complete.\\

From the motivic point of view (which will be considered in the second chapter of the author in this volume), projective completion is extremely important: we refer the reader to the 
aforementioned chapter for the meaning of the mysterious identity

\begin{equation}
[\mathbf{P}^n(k)] = \mathbf{1} + \mathbb{L} + \mathbb{L}^2 + \cdots + \mathbb{L}^n.
\end{equation}

\subsection{Direct sums}

One defines a {\em direct sum}\index{direct sum} of (not necessarily finite dimensional) vector spaces $V$ and $W$, both defined over $\mathbb{F}_1^n$, as 
\begin{equation}
V \oplus W := V \coprod W,
\end{equation}
where the distinguished points $\mathbf{0}_V$ and $\mathbf{0}_W$ are identified.

\begin{theorem}[Dimension Theorem]
We have that 
\begin{equation}
\mathrm{dim}(V \oplus W) =  \mathrm{dim}(V) + \mathrm{dim}(W).
\end{equation}
\end{theorem}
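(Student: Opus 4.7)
The plan is to unpack the definitions and reduce the statement to the trivial observation that the number of orbits of a free group action on a disjoint union is the sum of the numbers of orbits on the pieces.

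First, I would spell out the data. Write $V = (\mathbf{0}_V, X_V, \mu_n)$ and $W = (\mathbf{0}_W, X_W, \mu_n)$, with $\mu_n$ acting freely on $X_V$ and $X_W$. By the definition of dimension (the number of $\mu_n$-orbits, equivalently $\mathrm{card}(X_\bullet)/n$ in the finite case), we have $\dim V = |X_V|/n$ and $\dim W = |X_W|/n$ whenever these are finite, and more generally $\dim V$ (resp.\ $\dim W$) equals the cardinality of the orbit set $X_V/\mu_n$ (resp.\ $X_W/\mu_n$).

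Next, I would analyze $V \oplus W = V \coprod W$ with the distinguished points identified. The underlying pointed set is $(X_V \sqcup X_W,\ \mathbf{0})$, where $\mathbf{0}$ is the common image of $\mathbf{0}_V$ and $\mathbf{0}_W$ under the identification. The $\mu_n$-action on $X_V \sqcup X_W$ is the one induced componentwise from the actions on each summand; since the distinguished point lies outside $X_\bullet$, the identification does not alter the orbit structure. The action remains free because each summand's action was free and the orbits stay inside their respective summand.

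The key step is then the orbit count. Since no orbit of $\mu_n$ on $X_V \sqcup X_W$ can meet both $X_V$ and $X_W$, the orbit set decomposes as
\begin{equation}
(X_V \sqcup X_W)/\mu_n \;=\; (X_V/\mu_n) \,\sqcup\, (X_W/\mu_n),
\end{equation}
so taking cardinalities yields $\dim(V \oplus W) = \dim V + \dim W$. In the finite dimensional case this is the elementary identity $|X_V \sqcup X_W|/n = |X_V|/n + |X_W|/n$; in the infinite case one invokes the same identity at the level of the orbit sets, using ordinary cardinal addition.

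I do not expect any serious obstacle: the only subtlety is bookkeeping around the distinguished point (making sure the identification in the coproduct does not merge orbits) and ensuring the statement is interpreted with the appropriate notion of cardinal sum when $V$ or $W$ is infinite dimensional, as foreshadowed by the remark in the paper that a more refined discussion of dimensions of arbitrary cardinality is deferred to the next chapter.
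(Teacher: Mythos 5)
Your proposal is correct, and it is essentially the intended argument: the paper states the Dimension Theorem without proof, treating it as immediate from the definition of $V \oplus W$ as the disjoint union of the underlying free $\mu_n$-sets with basepoints identified, which is exactly the orbit-counting observation you spell out. Your care with the basepoint identification and with infinite cardinalities is a faithful filling-in of the details the paper leaves implicit.
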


\medskip
\subsection{Tensor products}

For defining the {\em tensor product}\index{tensor product} we start with vector spaces $V$ and $W$ defined over $\mathbb{F}_1^n$ and put
\begin{equation}
V \otimes W := V\times W,
\end{equation}
 the vector space corresponding to the Cartesian product of free $\mu_n$-sets. Here, we identify $\mathbf{0}_V \times W$ with 
 $V \times \mathbf{0}_W$.

 If the dimensions of $V$ and $W$ are respectively $d$ and $e$, then $V \otimes W$ consists of $den^2$ elements, so is of dimension $den$ over $\mathbb{F}_1^n$. In order to have a sensible notion of tensor product we have to eliminate the $n$-factor. We do this by identifying  $(x,y)$ with $(x^\nu,y^{\nu^{-1}})$ for any $\nu$ in $\mu_n$ and call the corresponding vector space $V \otimes W$ (so the latter is the quotient of $V \times W$ by the anti-diagonal action of $\mu_n$). If we denote the image of  $(x,y)$ in $V \otimes W$ by $x \otimes y$, then the identification merely says we can pull the $\mu_n$-action through the tensor sign:

 \begin{equation}
(x \otimes y)^{\nu} = x^{\nu} \otimes y = x \otimes y^{\nu}, 
 \end{equation}
 with $\nu \in \mu_n$ arbitrary. The set $V \otimes W$ is equipped with this $\mu_n$-action.

 \begin{theorem}[Dimension for tensor product]
 We have that 
 \begin{equation}
 \mathrm{dim}(V \otimes W) = \mathrm{dim}(V)\mathrm{dim}(W).
 \end{equation}
\end{theorem}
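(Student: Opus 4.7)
The plan is to produce an explicit basis of $V\otimes W$ of size $de=\mathrm{dim}(V)\mathrm{dim}(W)$ out of chosen bases of the two factors. Fix bases $\{b_1,\ldots,b_d\}$ of $V$ and $\{c_1,\ldots,c_e\}$ of $W$; by the coordinate description already given, every non-zero vector of $V$ has a unique expression $b_i^{\alpha^u}$ with $\alpha^u\in\mu_n=\langle\alpha\rangle$, and similarly every non-zero vector of $W$ is uniquely of the form $c_j^{\alpha^v}$.

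From the construction of $V\otimes W$ as the quotient of $V\times W$ by the anti-diagonal $\mu_n$-action, together with the identification of $\mathbf{0}_V\times W$ and $V\times\mathbf{0}_W$, the relation $(x\otimes y)^{\nu}=x^{\nu}\otimes y=x\otimes y^{\nu}$ holds tautologically. Applying it to a general non-zero element $b_i^{\alpha^u}\otimes c_j^{\alpha^v}$ of $V\otimes W$ one rewrites it as $(b_i\otimes c_j)^{\alpha^{u+v}}$; hence the $de$ elements $\{b_i\otimes c_j\}_{i,j}$ meet every $\mu_n$-orbit on the non-zero part of $V\otimes W$.

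The step with actual content is to show these $de$ representatives sit in pairwise distinct $\mu_n$-orbits. Suppose $(b_i\otimes c_j)^{\alpha^w}=b_k\otimes c_\ell$ in $V\otimes W$; lifting back through the quotient yields some $\nu\in\mu_n$ with $b_i^{\alpha^w\nu}=b_k$ in $V$ and $c_j^{\nu^{-1}}=c_\ell$ in $W$. Because distinct basis vectors of $V$ (respectively $W$) lie in distinct $\mu_n$-orbits by the very definition of a basis, this forces $i=k$ and $j=\ell$ (and $w=0$). The $\mu_n$-action on the non-zero part of $V\otimes W$ is therefore free with exactly $de$ orbits, so $\mathrm{dim}(V\otimes W)=\mathrm{card}(V\otimes W\setminus\{\mathbf{0}\})/n=den/n=de$, as required. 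The main — and essentially only — obstacle is the simultaneous bookkeeping of the anti-diagonal quotient and the collapse of the two zero-rows; verifying freeness of the action on non-zero vectors via the basis uniqueness statement disposes of it.
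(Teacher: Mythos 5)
Your proof is correct and realizes the same orbit-counting idea that the paper leaves implicit in its construction of $V\otimes W$: the text there simply observes that $V\times W$ has $den^2$ nonzero elements and that the anti-diagonal quotient eliminates the extra factor of $n$, so that $\mathrm{dim}(V\otimes W)=den/n=de$, and the theorem is stated without a separate proof. Your explicit check that the $de$ elements $b_i\otimes c_j$ form a complete set of representatives for a free $\mu_n$-action on the nonzero part of the quotient is a correct and slightly more careful packaging of that same count.
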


\subsection{Linear automorphisms}

A {\em linear automorphism}\index{linear!automorphism} $\alpha$ of an $\mathbb{F}_1^n$-vectorspace $V$ with basis $\{b_i\}$  is of the form 
\begin{equation}
\alpha(b_i) = b_{\sigma(i)}^{\beta_i} 
\end{equation}
for some power $\beta_i$ of the primitive $n$-th root of unity $\alpha$, and some permutation $\sigma \in \mathbf{S}_d$.
Then we have that 

\begin{equation}
\GL_d(\mathbb{F}_1^n) \cong \mathbf{S}_d \wr (\mu_n)^d.
\end{equation}

Elements of $\GL_d(\mathbb{F}_1^n)$ can be written as $d \times d$-matrices with precisely one element of $\mu_n$ in each row or column (and
conversely, any such element determines an element of $\GL_d(\mathbb{F}_1^n)$). 

(Note that the reason that rows and columns have only one nonzero element is that we do not have addition in our vector space.)

(In this setting, $\mathbf{S}_d$ is represented by $d\times d$-matrices with in each row and column exactly one $1$ | permutation matrices.)

\subsection{Determinants}

Using the setting of the precious paragraph, we define the {\em determinant}\index{determinant} 
\begin{equation}
\mathrm{det}(A) = \prod \beta_i \in \mu_n. 
\end{equation}

One verifies that the determinant is multiplicative and independent of the choice of basis.\\

\subsubsection{Examples}

Scalar multiplication by $\nu \in \mu_n$ gives an automorphism on any $d$-dimensional $\mathbb{F}_1^n$-vectorspace $V$ and the corresponding determinant clearly $\nu^d$. That is, the det-functor ``remembers'' the dimension modulo $n$. These mod $n$ features are a recurrent theme in absolute Linear Algebra.

Another example, which will become relevant when we come to reciprocity laws, is the following.

Take $n=2$. Then, an $\mathbb{F}_1^2$-vector space $V$ of dimension $d$ is a set $V$ consisting of $2d$ elements equipped with a free involution. Any linear automorphism $\alpha$ is represented by a $d \times d$-matrix $A$ having one nonzero entry in every row and column being equal to $+1$ or $-1$. Hence, the determinant $\mathrm{det}(A)$ is in $\{+1,-1\}$ (having put $\alpha = -1$).

On the other hand, by definition, the linear automorphism $\alpha$ determines a permutation on the $2d$ non-zero elements of $V$ (the elements of $X_V$). 
In fact, it is a permutation on the $d$ $\mu_2$-orbits.
The connection between these two interpretations is that $\mathrm{det}(A) = \mathrm{sgn}(A)$; the determinant gives the sign of the permutation.

\subsection{Power residue symbol}

For a prime power $q = p^k$ with $q \equiv 1 \mod{n}$, the roots of unity $\mu_n$ are contained in $\mathbb{F}_q^\times$, so that $\mathbb{F}_q$ is a vectorspace over $\mathbb{F}_1^n$. For any field unit $a \in \mathbb{F}_q^\times$ we have the power residue symbol

\begin{equation}
\left(\begin{array}{c}
a\\
\mathbb{F}_q
\end{array}\right)_n
= a^{\frac{q - 1}{n}} \in \mu_n.
\end{equation}

On the other hand, multiplication by $a$ is a linear automorphism $A$ on the $\mathbb{F}_1^n$-vectorspace $\mathbb{F}_q$ and hence we can look at its determinant $\mathrm{det}(A)$. The characteristic one interpretation of a classical lemma by Gauss now asserts: 

\begin{theorem}
The power residue symbol equals $\mathrm{det}(A)$.
\end{theorem}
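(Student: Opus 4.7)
The plan is to compute $\det(A)$ directly from the definition $\det(A) = \prod_i \beta_i$ after choosing a clever basis of $\mathbb{F}_q$ over $\mathbb{F}_1^n$, and then to identify the resulting product with $a^{(q-1)/n}$ by a one-line multiplicative calculation inside $\mathbb{F}_q^\times$.

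First I would fix the vector-space structure. Since $n \mid q-1$, the group $\mu_n \leq \mathbb{F}_q^\times$ acts freely on the set $X = \mathbb{F}_q^\times$ by multiplication, with distinguished point $\mathbf{0} = 0$. The number of $\mu_n$-orbits is $d = (q-1)/n$, so $\mathbb{F}_q$ is a $d$-dimensional $\mathbb{F}_1^n$-vector space in the sense of the preceding subsections. A natural basis is any transversal $\{b_1, \ldots, b_d\}$ of the $\mu_n$-orbits in $\mathbb{F}_q^\times$.

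Second, I analyze the action of $A = (\text{mult. by } a)$ on this basis. Because $\mathbb{F}_q^\times$ is abelian, multiplication by $a$ commutes with the $\mu_n$-action and hence permutes the $d$ orbits. Thus there exist a unique permutation $\sigma \in \mathbf{S}_d$ and unique scalars $\zeta_1,\ldots,\zeta_d \in \mu_n$ with
\begin{equation}
a \cdot b_i \;=\; \zeta_i \cdot b_{\sigma(i)}, \qquad i = 1,\ldots,d.
\end{equation}
By the definition of determinant recalled in the chapter, $\det(A) = \prod_{i=1}^{d} \zeta_i \in \mu_n$.

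Third, I would multiply these $d$ relations together inside $\mathbb{F}_q^\times$:
\begin{equation}
a^d \prod_{i=1}^{d} b_i \;=\; \Bigl(\prod_{i=1}^{d} \zeta_i\Bigr) \prod_{i=1}^{d} b_{\sigma(i)} \;=\; \det(A) \cdot \prod_{i=1}^{d} b_i.
\end{equation}
Cancelling the nonzero element $\prod_i b_i \in \mathbb{F}_q^\times$ gives $\det(A) = a^d = a^{(q-1)/n}$, which is precisely $\left(\begin{smallmatrix} a \\ \mathbb{F}_q \end{smallmatrix}\right)_n$.

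The only real subtlety is making sure the equality $\det(A) = a^d$, derived in $\mathbb{F}_q^\times$, is actually an equality in $\mu_n$ — but $\zeta_i \in \mu_n$ forces the left-hand side to lie in $\mu_n$, and Fermat gives $(a^d)^n = a^{q-1} = 1$, so the right-hand side automatically does too. Basis-independence of $\det(A)$ was already noted in the chapter, so no further check on the choice of transversal $\{b_i\}$ is required. The main conceptual point — and the only place where characteristic-one linear algebra meets classical number theory — is the cancellation step, where the product of the $\mu_n$-twists is forced by the abelianness of $\mathbb{F}_q^\times$ to collapse onto the scalar $a^d$.
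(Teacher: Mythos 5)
Your proof is correct: it is exactly the classical ``multiply the $d$ relations $a b_i = \zeta_i b_{\sigma(i)}$ and cancel the transversal'' argument for Gauss's lemma, correctly transplanted to the $\mathbb{F}_1^n$-setting, and it matches the paper's definition $\det(A)=\prod_i\beta_i$ on the nose. The paper itself supplies no proof (it merely invokes ``a classical lemma by Gauss''), so your argument is precisely the standard one being alluded to, and every step --- the freeness of the $\mu_n$-action on $\mathbb{F}_q^\times$, the count $d=(q-1)/n$, and the cancellation of $\prod_i b_i\neq 0$ --- is sound.
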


\newpage
\section{Group representations}

In a recent communication \cite{JLP}, Javier L\'{o}pez Pe\~{n}a  observed that the classical
set-theoretical approach to representation theory of groups can be seen as a degenerate case of the general theory of linear representations through elementary $\Fun$-theory,  giving some common ground that explains the similarities between these two theories. We will start this section by describing this observation. Additional comments on (linear representations of) braid groups are made, some of which are taken from \cite{KapranovUN}.\\

\subsection{Linear representations}

Let us first recall that one may think of two basic ways of representing groups $G$: 

\begin{itemize}
\item[(a)]
The first one | with a more geometrical flavor | is looking for a  $\K$-vector space $V$, where $\K$ is a field,  and trying to describe $G$ inside the group  of automorphisms of $V$ by looking for group homomorphisms
\begin{equation}
\rho: G \longrightarrow \mathbf{GL}(V), 
\end{equation}
also called {\em linear representations}\index{linear!representations}. Note that one does not ask $\rho$ to be injective (although this property is often highly desirable), so that we could have a nontrivial kernel. If the representation, however, {\em is} faithful, the group $G$ is called {\em linear}\index{linear!group}.
\item[(b)]
The other | set-theoretical | one, consists of looking for sets $X$  endowed with a group action $G \curvearrowright X$ of $G$, also called  {\em $G$-sets}\index{$G$-set}, that allow us to describe $G$ as a group of permutations. So we seek for group homomorphisms
\begin{equation}
\gamma: G \longrightarrow \mathbf{Sym}(X).
\end{equation}
(Here, again, initially one does not ask $\gamma$ to be faithful, so that it could be that we describe $G/\mathrm{ker}(\gamma)$, rather than $G$, as a group inside $\mathbf{Sym}(X)$.)\\
\end{itemize}

\subsection{Representations over $\Fun$}

There are reasons to think that both approaches should have similar properties | after all we are trying to describe the same object in different guises. When one is looking for linear representations of a group, one has to fix the field over which the vector space is defined, as well as the dimension of the vector space. In particular, we might take representations defined over {\em finite fields}, diving into what is called ``modular representation theory''. 
But now we might try to study linear representations over $\Fun$. As $\Fun$-vector spaces  are just pointed sets $V = (\mathbf{0},\Omega)$ of which the cardinality $\omega = \vert \Omega \vert$ corresponds to the vector space dimension, and as the Tits argument on Chevalley groups tells us that the automorphism group of $V$ is the symmetric group $\mathbf{S}(\Omega)$,  we conclude that: 

\begin{proposition}[J.  L\'{o}pez Pe\~{n}a \cite{JLP}]
Linear representations  over $\Fun$  of a group are precisely permutation representations.\\
\end{proposition}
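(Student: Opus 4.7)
The plan is to unfold the definitions and see that the equivalence is essentially tautological, granted the absolute linear algebra set up earlier. First I would recall that, by the conventions fixed in the previous section, an $\Fun$-vector space is a pointed set $V = (\mathbf{0},\Omega)$, with dimension $\dim V = |\Omega|$. Since over $\Fun$ there is no vector addition and scalar multiplication by the only nonzero scalar $1$ is trivial, the only structure that an $\Fun$-linear map must preserve is the underlying set together with the distinguished point $\mathbf{0}$.

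Next I would invoke the computation of $\GL_d(\Fun)$ already performed in the chapter: setting $n = 1$ in the formula
\begin{equation}
\GL_d(\Fun^n) \cong \mathbf{S}_d \wr (\mu_n)^d
\end{equation}
collapses $\mu_1$ to the trivial group and yields $\GL(V) \cong \mathbf{S}(\Omega)$, in agreement with Proposition \ref{propCT}. Thus a linear representation $\rho: G \longrightarrow \GL(V)$ is literally the same datum as a homomorphism $\gamma: G \longrightarrow \mathbf{S}(\Omega)$, that is, a permutation representation of $G$ on $\Omega$.

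For the converse direction, given any $G$-set $(\gamma, \Omega)$, I would build the $\Fun$-vector space $V = (\mathbf{0}, \Omega)$ (of dimension $|\Omega|$) and observe that each permutation of $\Omega$ extends uniquely to an $\Fun$-linear automorphism of $V$ by fixing $\mathbf{0}$; composing with $\gamma$ furnishes the desired linear representation $\rho$. These two assignments are mutually inverse and functorial in $G$, so the correspondence between linear representations over $\Fun$ and permutation representations is a bijection.

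I do not expect any genuine obstacle: the content is entirely captured by the identification $\GL(V) = \mathbf{S}(\Omega)$, which is already established. The only point to check with some care is that the trivial scalar action of $\Fun^{\times} = \{1\}$ imposes no further condition beyond bijectivity and preservation of $\mathbf{0}$, so nothing is lost in translating back and forth.
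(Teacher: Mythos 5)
Your argument is correct and follows essentially the same route as the paper: the paper likewise observes that an $\Fun$-vector space is just a pointed set $(\mathbf{0},\Omega)$ whose automorphism group is $\mathbf{S}(\Omega)$, so that linear representations over $\Fun$ and permutation representations are literally the same data. Your additional remarks (specializing $\GL_d(\Fun^n)\cong \mathbf{S}_d\wr(\mu_n)^d$ at $n=1$ and spelling out the inverse assignment) only make explicit what the paper leaves implicit.
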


\subsection{Special example}

Consider a faithful linear representation
\begin{equation}
\rho: G \longrightarrow \mathbf{GL}(V)
\end{equation}
of some group $G$. 
In the author's second chapter we will encounter a particular kind of such a representation that will be very important for $\Fun$-geometry. It is defined by the property that the projection of $\rho(G)$ on $\PGL(V)$ (after dividing out the scalars) acts sharply transitively on the points of the corresponding projective space $\hP(V) = (V\setminus \{\mathbf{0}\})/\sim$. In fact, we will consider ``semi-linear representations'' (allowing twists by field automorphisms)
\begin{equation}
\rho: G \longrightarrow \mathbf{\Gamma L}(V)
\end{equation}
with the same properties. These representations  (called {\em Singer representations}\index{Singer representation}) will be used in a framework to understand the ad\`{e}le class space of a global field in characteristic $0$. \\

Note that the $\Fun$-analogs of such representations are nothing else than sharply transitive goup actions  $G \curvearrowright X$.\\

\subsection{Braid groups}

We introduce braid groups in three different ways.

\subsubsection{Braid groups via strings}

Let $n \in \mathbb{N}^\times$.
An {\em $n$-braid}\index{$n$-braid} consists of $n$ {\em strings}\index{string} or {\em strands}\index{strand} which connect $n$ ``top inputs'' (called $1,2,\ldots,n$) to $n$ ``bottom inputs'' (also called $1,2,\ldots,n$). Strands must move from top to bottom at any time.
Note that there is a natural composition of $n$-braids which also yields $n$-braids, and which makes the set of all $n$-braids into a group $\mathbb{B}_n$\index{$\mathbb{B}_n$} (taken that we identify braids which can be naturally transformed into each other | see \S\S \ref{braidfund} for more on these identifications).  

\begin{proposition}
The braid group $\mathbb{B}_n$ is torsion-free.
\end{proposition}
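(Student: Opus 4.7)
The plan is topological: exhibit $\mathbb{B}_n$ as the fundamental group of a finite-dimensional aspherical space, and then invoke the fact that groups of this kind cannot have torsion. Concretely, consider the ordered configuration space
\begin{equation}
F_n(\mathbb{C}) = \{(z_1,\ldots,z_n) \in \mathbb{C}^n : z_i \neq z_j \text{ for all } i \neq j\},
\end{equation}
together with its quotient $UF_n(\mathbb{C}) = F_n(\mathbb{C})/\mathbf{S}_n$ under the natural permutation action. A loop in $UF_n(\mathbb{C})$ is precisely the trajectory traced in the complex plane by $n$ unordered points and thus draws an $n$-braid, giving a canonical isomorphism $\pi_1(UF_n(\mathbb{C})) \cong \mathbb{B}_n$.

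The key input is that $UF_n(\mathbb{C})$ is aspherical, i.e.\ a $K(\mathbb{B}_n,1)$. This follows from the classical Fadell--Neuwirth fibration $F_n(\mathbb{C}) \to F_{n-1}(\mathbb{C})$ obtained by forgetting the last coordinate, whose fiber $\mathbb{C}\setminus\{n-1 \text{ points}\}$ is homotopy equivalent to a wedge of circles. Induction on $n$, applied to the long exact sequence of homotopy groups, yields $\pi_k(F_n(\mathbb{C})) = 0$ for $k \geq 2$; since $\mathbf{S}_n$ acts freely and properly discontinuously, the same vanishing passes to $UF_n(\mathbb{C})$. Moreover $UF_n(\mathbb{C})$ has the homotopy type of a finite CW complex of dimension at most $2n$.

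Now suppose, for contradiction, that some $g \in \mathbb{B}_n\setminus\{\id\}$ has finite order $m > 1$. The cyclic subgroup $\langle g \rangle \cong \mathbb{Z}/m\mathbb{Z}$ would then act freely on the contractible universal cover $\widetilde{UF_n(\mathbb{C})}$, a finite-dimensional CW space. Smith theory forbids this: equivalently, any $K(\mathbb{Z}/m,1)$ has nonzero cohomology with $\mathbb{F}_p$-coefficients in infinitely many degrees (for any prime $p$ dividing $m$), while a group admitting a finite-dimensional classifying space has finite cohomological dimension. The contradiction forces $g = \id$.

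The step most likely to require care is the asphericity of $UF_n(\mathbb{C})$, but this is classical (Fadell--Neuwirth, 1962) and the inductive argument above is essentially a formality once the fibration is in place. As an alternative, one could avoid topology entirely and invoke Dehornoy's theorem that $\mathbb{B}_n$ is left-orderable: any left-orderable group is automatically torsion-free, since $g > \id$ implies $g^k > \id$ for every $k \geq 1$, ruling out $g^k = \id$.
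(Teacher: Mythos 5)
Your argument is correct, but note that the paper itself offers no proof of this proposition at all: it is stated bare, as a known fact, and is only used later (e.g.\ to rule out $\GL_n(\Fun[X,X^{-1}])$ as a candidate for a braid-group analogue, since that group contains the torsion group $\mathbf{S}_n$). So there is nothing in the text to compare against; what you have supplied is the standard topological proof, and it is essentially complete. The chain of reasoning is sound: $\mathbb{B}_n \cong \pi_1(UF_n(\mathbb{C}))$ (which matches the paper's own description of $\mathbb{B}_d(X) = \pi_1(\widetilde{X^d}_*,x_0)$ with $X = \mathbb{C}$), asphericity via the Fadell--Neuwirth fibrations and the long exact homotopy sequence, descent of asphericity along the free $\mathbf{S}_n$-covering, and then the cohomological-dimension obstruction to torsion. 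One small quibble: the label ``Smith theory'' is not quite the right name for the final step --- what you actually use (and correctly state) is that $H^*(\mathbb{Z}/m;\mathbb{F}_p)$ is nonzero in infinitely many degrees, so $\mathbb{Z}/m$ cannot act freely on a finite-dimensional contractible CW complex; that is the classical finite-cohomological-dimension argument rather than Smith theory proper. Your closing alternative via Dehornoy's left-orderability of $\mathbb{B}_n$ is also valid and arguably more economical, though it outsources the hard work to a deep theorem; either route would serve as a legitimate proof of the proposition the paper leaves unproved.
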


It is clear that an $n$-braid naturally induces an element of $\mathbf{S}_n$ (by going from top to bottom), and this association yields a surjective 
group homomorphism
\begin{equation}
\label{braideq}
\gamma: \mathbb{B}_n \longrightarrow \mathbf{S}_n.
\end{equation}

The kernel $\mathbb{P}_n$ of $\gamma$ is the {\em pure braid group}\index{pure braid group} (on $n$ strings), and consists of all $n$-braids with the same input and output position:

\begin{equation}
\1 \longrightarrow \mathbb{P}_n \longrightarrow \mathbb{B}_n \longrightarrow \mathbf{S}_n \longrightarrow \1.
\end{equation}

Any $n$-braid can be devided in intervals such that in each interval there is precisely one crossing of strings, so the set 
\begin{equation}
\{ \sigma_i \vert i \in \{1,2,\ldots,n - 1\}\},
\end{equation}
where $\sigma_j$ is defined as the $n$-braid in which there is an overcrossing between inputs $j$ and $j + 1$ (and nothing else), generates $\mathbb{B}_n$. (Undercrossing yields the inverses of the  generators.)

\begin{figure}
\centering
\begin{tikzpicture}[style=thick, scale=1.5]
\foreach \x in {1,2,3,4,5}{
\fill (\x,0) circle (2pt);}
\foreach \x in {1,2,3,4,5}{
\fill (\x,-3) circle (2pt);}
\draw (1,0.3) node {1};
\draw (2,0.3) node {2};
\draw (3,0.3) node {3};
\draw (4,0.3) node {4};
\draw (5,0.3) node {5};
\draw (1,-3.3) node {1};
\draw (2,-3.3) node {2};
\draw (3,-3.3) node {3};
\draw (4,-3.3) node {4};
\draw (5,-3.3) node {5};
\draw (1,0) -- (1,-3);
\draw (2,0) -- (2,-3);
\draw (3,0) -- (3,-3);
\draw (4,0) -- (5,-3);
\draw (5,0) -- (4.6,-1.4);
\draw (4,-3) -- (4.45,-1.8);
\end{tikzpicture} 
\caption{The generator $\sigma_4$ in $\mathbb{B}_5$.}
\end{figure}
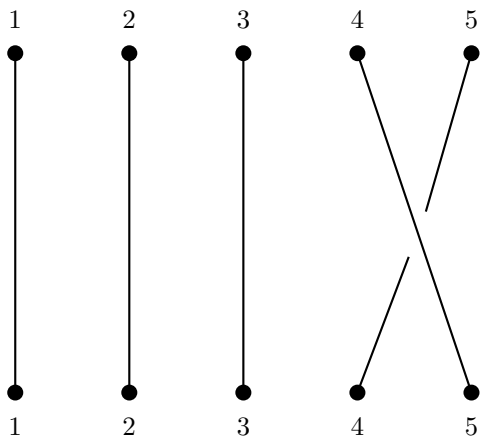

Now in terms of relations in the $\sigma_j$s, Artin proved that all relations can be deduced from only two, namely:
\begin{equation}
\left\{ 
\begin{array}{cccc}
 \sigma_i\sigma_j &= &\sigma_j\sigma_i &\mbox{if}\ \ \vert i - j\vert \geq 2\\
\sigma_i\sigma_{i + 1}\sigma_i &= &\sigma_{i + 1}\sigma_i\sigma_{i + 1} & 
\end{array}
\right.
\end{equation}

So
\begin{equation}
\mathbb{B}_n \cong  \langle \sigma_i, i = 1,2,\ldots,n - 1 \vert  \sigma_i\sigma_j = \sigma_j\sigma_i\ \mbox{if}\  \vert i - j\vert \geq 2, 
\sigma_i\sigma_{i + 1}\sigma_i = \sigma_{i + 1}\sigma_i\sigma_{i + 1}   \rangle.
\end{equation}

Recalling the presentation by generators and relations of $\mathbf{S}_n$ as a Coxeter group, we now have an explicit form for the homomorphism $\gamma$ (noting that if the extra relations $\{ {\sigma_i}^2 = \mathrm{id} \vert i = 1,2,\ldots,n - 1\}$ would be added, we would get the symmetric group).\\

\subsubsection{Braid groups as fundamental groups}
\label{braidfund}

Let $X$ be a connected topological space, let $d \geq 2$ be a positive integer, and consider the $d$-fold Cartesian product of $d$ copies of $X$, denoted by $X^d$. 
Let $\widetilde{X^d}$ be the {\em symmetrized $d$-fold Cartesian product}\index{symmetrized Cartesian product}, which is defined by moding out the natural action of the symmetric group $\mathbf{S}_d$ on the indices of the Cartesian coordinates. (That is, we consider  unordered $d$-tuples.) We only want to consider elements with no repeated entries, so we take out the ``hyperplanes'' with equations $x_i = x_j$ on the coordinates ($i \ne j$).  The obtained space is denoted by $\widetilde{X^d}_*$\index{$\widetilde{X^d}_*$} (and its elements can be identified with the subsets of $X$ of size $d$). We define the {\em braid group}\index{braid group} $\mathbb{B}_d(X)$\index{$\mathbb{B}_d(X)$} of $X$ on $d$ {\em strings}\index{string}  as the fundamental group of this space with respect to an arbitrary point $x_0$ (of which the choice does not affect the isomorphism class of the group):
\begin{equation}
\mathbb{B}_d(X) := \pi_1(\widetilde{X^d}_*,x_0).
\end{equation}

Now put $X = \mathbb{C}$; then there is a natural isomorphism between $\mathbb{B}_d(\mathbb{C})$ and $M\mathbb{C}^d[X]$\index{$M\mathbb{C}^d[X]$}, which is the set of polynomials in $X$ over $\mathbb{C}$ of degree $d$ and with leading coefficient $1$, without multiple roots. The map is given by 
\begin{equation}
\{u_1,\ldots,u_n\} \in  \mathbb{B}_d(\mathbb{C})\ \  \longrightarrow\ \ (X - u_1)\cdots(X - u_n) \in M\mathbb{C}^d[X]. 
\end{equation}

One can show that $\mathbb{B}_n(\mathbb{C})$ is isomorphic to the group $\mathbb{B}_n$ as above.\\

\medskip
\subsubsection{Braid groups via graphs of type $\mathbf{A}_{n - 1}$}

Let $\Gamma = (V,E)$ be a graph, with vertex set $V$ and edge set $E$.  We define the {\em Artin group}\index{Artin group} $A(\Gamma)$ as 
the free group $F(V)$ generated by the elements of $V$, modulo the following relations:
\begin{itemize}
\item[(R1)]
If $x$ and $y$ are adjacent vertices, then
\begin{equation}
xyx = yxy.
\end{equation}
\item[(R2)]
If $x$ and $y$ are not adjacent, they commute.
\end{itemize}

We also say that $A(\Gamma)$ is an Artin group ``of type $\Gamma$''\index{Artin group!of type $\Gamma$}. If $\Gamma$ is a Coxeter graph of type $\mathbf{A}_{n - 1}$, then $A(\Gamma)$ is isomorphic to $\mathbb{B}_n$.

\bigskip
$\mathbf{A}_{n - 1}$: \begin{tikzpicture}[style=thick, scale=1.5]
\foreach \x in {1,2,3,5,6}{
\fill (\x,0) circle (2pt);}

\draw (1,0) -- (2,0);
\draw (2,0) -- (3,0);
\draw (3,0) -- (3.5,0);
\draw (4.5,0) -- (5,0);
\draw (5,0) -- (6,0);
\draw (4,0) node {$\dots$} ;

\end{tikzpicture}

\medskip
Let $\Gamma$ be a graph,
and let $d \in \mathbb{N}^\times$. The {\em Shepard group}\index{Shepard group} $A(\Gamma,d)$ is the quotient of $A(\Gamma)$ by the relations
$v^d = \1$ for all $v \in V$.

\begin{proposition}
Let $\Gamma$ be a graph, and $A(\Gamma)$ its Artin group. Then $A(\Gamma,2)$ is the Coxeter group related to $\Gamma$. If $\Gamma$ is a Coxeter graph of type $\mathbf{A}_{n - 1}$, then $A(\Gamma) \cong \mathbb{B}_n$ and $A(\Gamma,2) \cong \mathbf{S}_n$.
\end{proposition}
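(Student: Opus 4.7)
The plan is to prove each of the three claims by direct comparison of presentations; the statements are essentially a bookkeeping exercise, with the only bit of actual content being the braid-to-Coxeter rewriting.

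First I would address the assertion that $A(\Gamma,2)$ is the Coxeter group attached to $\Gamma$. Starting from the presentation of $A(\Gamma)$, I adjoin the relations $v^2 = \id$ for every $v \in V$. For non-adjacent vertices $x,y$, the commutation $xy = yx$ combined with $x^2 = y^2 = \id$ becomes $(xy)^2 = \id$, which is the $m_{xy}=2$ Coxeter relation. For adjacent vertices, I would verify that the braid relation $xyx = yxy$, together with $x^2 = y^2 = \id$, is equivalent to $(xy)^3 = \id$: multiplying $xyx = yxy$ through by $yx$ on the right and using $y^2 = x^2 = \id$ gives $xyxyxy = yxy\cdot yx = y(xy^2x) = y\cdot x^{-1}\cdot \id = \id$, so $(xy)^3 = \id$. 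Conversely $(xy)^3 = \id$ together with the involution relations yields $xyx = (yxy)^{-1} = yxy$. Hence the quotient presentation is exactly that of the Coxeter group of $\Gamma$ (with $m_{vw} = 3$ for adjacent vertices and $m_{vw}=2$ otherwise), as the paper's earlier conventions require.

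Next, for the isomorphism $A(\mathbf{A}_{n-1}) \cong \mathbb{B}_n$, I would line up presentations: the graph $\mathbf{A}_{n-1}$ has vertices $v_1,\ldots,v_{n-1}$ with $v_i$ adjacent to $v_j$ exactly when $|i-j| = 1$. Unwinding the definition of $A(\Gamma)$ yields exactly the Artin presentation $\langle v_i \mid v_iv_j = v_jv_i \text{ if } |i-j|\geq 2,\ v_iv_{i+1}v_i = v_{i+1}v_iv_{i+1}\rangle$ recalled earlier in the chapter for $\mathbb{B}_n$ (after identifying $v_i$ with the standard overcrossing generator $\sigma_i$). The identification of generators gives mutually inverse homomorphisms.

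Finally, the third claim follows by combining the first two, together with the standard fact (used repeatedly already in the chapter) that the Coxeter group attached to the diagram $\mathbf{A}_{n-1}$ is the symmetric group $\mathbf{S}_n$: $A(\mathbf{A}_{n-1},2) \cong W(\mathbf{A}_{n-1}) \cong \mathbf{S}_n$. The only nontrivial point in the entire argument is the equivalence between the braid relation plus involutivity and the cubic Coxeter relation; everything else is a transcription of presentations, so I do not anticipate any serious obstacle.
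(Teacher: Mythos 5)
The paper states this proposition without proof, so there is no argument of the author's to compare against; your proof is the standard presentation-matching argument and is correct in substance. Adjoining $v^2=\id$ converts the Artin relations into the Coxeter relations with $m=2$ on non-edges and $m=3$ on edges (which is exactly the paper's diagram convention for an unlabelled single edge), the type $\mathbf{A}_{n-1}$ Artin presentation coincides with Artin's presentation of $\mathbb{B}_n$ quoted earlier in the chapter, and the type $\mathbf{A}_{n-1}$ Coxeter group on $n-1$ generators is $\mathbf{S}_n$ by the presentation recorded in Section~2. One computational line should be repaired before this is spliced in: multiplying $xyx=yxy$ on the right by $yx$ produces the five-letter word $xyxyx = yxy\cdot yx = y$, not the six-letter word $xyxyxy$, and your simplification $y(xy^2x)=y\cdot x^{-1}\cdot \id$ is also off (it equals $yx^2=y$, not $\id$). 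The clean version is to multiply on the right by $yxy$: then $xyxyxy = yxy\cdot yxy = yx(y^2)xy = y(x^2)y = y^2 = \id$, and conversely $(xy)^3=\id$ with $x^2=y^2=\id$ gives $xyxy = (xy)^{-1}=yx$, hence $xyx=yxy$. With that line fixed the argument is complete.
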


\medskip
\subsubsection{Linear representations of $\mathbb{B}_n$}

Since we know that the symmetric groups are general linear groups over $\Fun$, one might wonder (and this is wat Kapranov and Smirnov do in their manuscript \cite{KapranovUN}), whether the expression (\ref{braideq}) fits into a diagram

\begin{equation}
\begin{array}{ccc}
?_1 &\longrightarrow &\GL_n(\mathbb{F}_q)\\
&&\\
\downarrow& &\downarrow\\
&&\\
 ?_2 \cong \mathbb{B}_n &\overset{\gamma}\longrightarrow &\GL_n(\Fun) \cong \mathbf{S}_n,\\
&&\\
\end{array}
\end{equation}
where passing from the first row to the second means passing to the limit $q \longrightarrow 1$. The first row should be seen as a class of arrows (with $q$ taking values in the set of prime powers).

Kapranov and Smirnov  suggest to replace $?_2$ by $\GL_n(\Fun[X])$, and also suggest that the evaluation morphism $X = 0$ yields $\gamma$. Their motivation is a theorem of Drinfeld which states that over a finite field $\F_q$, the profinite completion of $\GL_n(\F_q[X])$ is embedded in the fundamental group of the space of $q$-polynomials of degree $n$ in a rather similar way $\mathbb{B}_n$ is the fundamental group of $M\mathbb{C}^n[X]$. Still, as we will see in the second chapter of the author in this monograph, in the direction we want to take expressions such as $\Fun[X]$, this idea does not make much sense, as $\GL_n(\Fun[X])$ will not be a group. (Another rather natural candidate would be $\GL_n(\Fun[X,X^{-1}])$, but $\mathbf{S}_n$ is a subgroup, while $\mathbb{B}_n$ is torsion-free.)

In any case, an $\Fun^n$-linear representation of $\mathbb{B}_n$ is given by the map
\begin{equation}
\rho: \sigma_i \longrightarrow
\left( 
\begin{array}{cccc}
\I_{i  - 1} &&& \\
&0 &\mu &\\
&\mu^{-1} &0 &\\
&&&\I_{n - 1 - i}
\end{array}
\right),
\end{equation}
where $\Fun^n = \mu_n \cup \{0\} = \langle \mu \rangle \cup \{0\}$. This representation is of course not faithful, and  inside $\GL_n(\Fun^n)$ the elements
$\rho(\sigma_i)$, $i = 1,2,\ldots,n - 1$ generate a subgroup isomorphic to $\mathbf{S}_n$.\\

The linearity of braid groups over ``real fields'' was only quite recently obtained by Bigelow \cite{Bigelow} and Krammer \cite{Krammer} (independently), and presented the solution of a major open problem. Note that any faithful linear representation
\begin{equation}
\rho: \mathbb{B}_n \longrightarrow \GL_m(R)
\end{equation}
with $m \in \mathbb{N}^\times$ and $R$ an ``$\Fun$-ring'' (see the author's second chapter) which is embeddable in a field (or division ring) $\K$ would give a faithful linear representation over $\K$.

\newpage
\section{From absolute mantra to absolute Algebraic Geometry}

In the early nineties, Christopher Deninger published his studies (\cite{Deninger1991}, \cite{Deninger1992}, \cite{Deninger1994}) on motives and regularized determinants. In \cite{Deninger1992}, Deninger gave a description of conditions on a category of motives that would admit a translation of Weil's proof of the Riemann Hypothesis for function fields of projective curves over finite fields $\F_q$ to the hypothetical curve $\overline{\Spec(\Z)}$. In particular, he showed that the following formula would hold:

\begin{equation}
\zeta_{\overline{\Spec(\Z)}}(s) = 2^{-1/2}\pi^{-s/2}\Gamma(\frac s2)\zeta(s)  
		=  \frac{\Rprod_\rho\frac{s - \rho}{2\pi}}{\frac{s}{2\pi}\frac{s - 1}{2\pi}} \overset{?}{=} \nonumber \\
\end{equation}
	\begin{eqnarray} 	
	 \frac{\mbox{\textsc{Det}}\Bigl(\frac 1{2\pi}(s\cdot\id - \Theta)\Bigl| H^1(\overline{\Spec(\Z)},*_{\mathrm{abs}})\Bigr.\Bigr)}{\mbox{\textsc{Det}}\Bigl(\frac 1{2\pi}(s\cdot\id -\Theta)\Bigl| H^0(\overline{\Spec(\Z)},*_{\mathrm{abs}})\Bigr.\Bigr)\mbox{\textsc{Det}}\Bigl(\frac 1{2\pi}(s\cdot\id - \Theta)\Bigl| H^2(\overline{\Spec(\Z)},*_{\mathrm{abs}})\Bigr.\Bigr)}, 
	\end{eqnarray}
where $\Rprod$ is the infinite {\em regularized product}, similarly
$\mbox{\textsc{Det}}$ denotes the {\em regularized determinant}\index{regularized determinant} (cf. the author's second chapter), $\Theta$ is an ``absolute'' Frobenius endomorphism and the $H^i(\overline{\Spec(\Z)},*_{\mathrm{abs}})$ are certain proposed cohomology groups. The $\rho$s run through the set of critical zeroes of the classical Riemann zeta. 

This description combines with Kurokawa's work on multiple zeta functions (\cite{Kurokawa1992}) from 1992 to  the hope that there are motives $h^0$ (``the absolute point''), $h^1$ and $h^2$ (``the absolute Lefschetz motive'')\index{absolute!Lefschetz motive} with zeta functions
	\begin{equation}
	\label{eqzeta}
		\zeta_{h^w}(s) \ = \ \mbox{\textsc{Det}}\Bigl(\frac 1{2\pi}(s\cdot\id-\Theta)\Bigl| H^w(\overline{\Spec(\Z)},*_{\mathrm{abs}})\Bigr.\Bigr) 
	\end{equation}
for $w=0,1,2$. Deninger computed that $\zeta_{h^0}(s)=s/2\pi$ and $\zeta_{h^2}(s)=(s-1)/2\pi$. Manin proposed in \cite{Manin} the interpretation of $h^0$ as $\Spec(\Fun)$ and the interpretation of $h^2$ as the affine line over $\Fun$. The search for a proof of the Riemann Hypothesis became a main motivation to look for a geometric theory over $\Fun$. \\

About ten years after Manin's lecture notes \cite{Manin}, the first papers got published in which scheme theories over $\mathbb{F}_1$ were developed, the first one being Deitmar's important paper \cite{Deitmarschemes2} in 2005 (cf. the author's second chapter for more details). One year before, Soul\'{e} already published his $\mathbb{F}_1$-approach to varieties \cite{Soule}.

We have seen that once we forget about addition, a good basic theory of Linear Algebra can be developed which agrees with 
Tits's initial observations on the symmetric groups (and their geometries) as limit objects over $\Fun$. 
In the author's second chapter, several versions of Algebraic Geometry will be described in detail. And in yet another chapter, Lorscheid explains his side of the story with much rigor. 
Those will all be based on the fundamental observation that if we want develop a scheme theory over $\Fun$, we need algebraic objects in which we do not have addition at hand. One of the aims is to have a construction of base extension
\begin{equation}
\Fun \longrightarrow \Z
\end{equation}
in order to be able to pass to Grothendieck's $\Z$-schemes from below.\\

We will revisit the combinatorial realizations
of the obtained scheme theories again and again, and show that they are in perfect harmony with what was obtained in the present chapter.


\newpage

\frenchspacing

\newpage
\printindex

%
%
%





\end{document}